\documentclass[12pt,reqno]{amsart}

\usepackage[margin=1in]{geometry}

\usepackage{graphicx}%
\usepackage{multirow}%

\usepackage{amsaddr}

\usepackage{commath}
\usepackage{amsfonts}
\usepackage{amsmath}
\usepackage{setspace}

\usepackage{amsthm}%

\usepackage{hyperref}

\usepackage{mathrsfs}%
\usepackage[title]{appendix}%
\usepackage{xcolor}%
\usepackage{textcomp}%
\usepackage{manyfoot}%
\usepackage{booktabs}%
\usepackage{algorithm}%
\usepackage{algorithmicx}%
\usepackage{algpseudocode}%
\usepackage{listings}%

\newcommand{\End}{\mathrm{End}}

\newcommand{\U}{\mathrm{U}}
\newcommand{\I}{\mathrm{I}}
\newcommand{\diag}{\mathrm{diag}}
\newcommand{\innerpro}[2]{\langle #1, #2 \rangle}

\newtheorem{theorem}{Theorem}[section]
\newtheorem{proposition}{Proposition}[section]
\newtheorem{lemma}{Lemma}[section]

\newtheorem{corollary}{Corollary}[section]

\newtheorem{definition}{Definition}[section]

\usepackage{fancyhdr}

\begin{document}
\title[Toeplitz $C^*$-algebras on Weighted Fock Spaces]{Toeplitz $C^*$-algebras on Radially Weighted Fock Spaces: Commutativity and Spectral Representation}

\author[K. Bdarneh]{%
Khalid Bdarneh\\[2pt]
{\scriptsize Department of Mathematics, University of Jordan}\\
{\scriptsize Amman, Jordan}\\
{\scriptsize \texttt{\lowercase{K.bdarneh@ju.edu.jo}}}
}

\begin{abstract}
    We study Toeplitz operators acting on radial weighted Fock spaces. We use tools from representation theory to construct commutative families of $C^*$-algebras that are generated by Toeplitz operators whose symbols are invariant under the action of $\U(n)$. For a partition $m=(b_1,...,b_k)$ of an integer $n$, we realize \[\mathbf{\U_m}:=\U(b_1)\times...\times\U(b_k)\] as a block diagonally subgroup of $\U(n)$ and describe the decomposition of the weighted Fock space into irreducible $\mathbf{U_m}$-modules. This allows us to study Toeplitz operators with $\mathbf{U_m}$-invariant, equivalently $k$-quasi-radial, symbols. Also, we provide an explicit integral representation of their eigenvalues. More generally, for an arbitrary compact subgroup $H\subseteq\U(n)$, we characterize the commutativity of the $C^*$-algebra generated by $H$-invariant Toeplitz operators in terms of the multiplicity-free property of the representation $\pi|_H$. Finally, for a logarithmically growing radial weight, we construct a bounded radial symbol for which the corresponding eigenvalue sequence is not uniformly continuous with respect to the square-root metric. Consequently, the uniform closure of the set of eigenvalue sequences does not coincide with the $C^*$-algebra of bounded sequences that are uniformly continuous with respect to the square-root metric.

\end{abstract}

\keywords{Toeplitz operators, Weighted Fock space, Quasi-radial symbols, Commutative $C^*$-algebra}


\subjclass[2020]{47B35, 22D25, 30H05}

\maketitle

\section{Introduction}
An interesting aspect of studying Toeplitz algebras focuses on characterizing the commutativity of the $C^*$-algebras generated by Toeplitz operators acting on various function spaces; see \cite{GRUDSKY20061,Raul2007commutative,grudsky2002toeplitz,Dawson2015}. In particular, suppose $T_\varphi$ is a Toeplitz operator with a symbol $\varphi$ acting on a certain function space, one may ask; what conditions do we need on the symbols to generate a commutative $C^*$-algebra? Furthermore, can we describe the spectrum of these Toeplitz operators in terms of their symbols?

Let $L^2(\mathbb{C},d\mu)$ be the space of all Lebesgue measurable functions on $\mathbb{C}$ with the Gaussian measure \[d\mu(z)=\dfrac{1}{\pi}e^{-|z|^2}dz\] where $dz$ is the Lebesgue measure on $\mathbb{C}$. The Fock space, denoted by $\mathcal{F}^2(\mathbb{C})$, is the subspace of $L^2(\mathbb{C},d\mu)$ consisting of entire functions. Let $P:L^2(\mathbb{C},d\mu)\rightarrow \mathcal{F}^2(\mathbb{C})$ be the orthogonal projection. Given a function $\varphi(z)\in L^{\infty}(\mathbb{C})$, the Toeplitz operator $T_\varphi:\mathcal{F}^2(\mathbb{C})\rightarrow \mathcal{F}^2(\mathbb{C})$ is defined by \[T_\varphi f(z)=P(f\varphi)(z)\]

Toeplitz operators on the Fock space have been studied for a long time; see \cite{bauer2015commuting,bauer2023self,coburn2011toeplitz,grudsky2002toeplitz,fulsche2024essential}. In general, the $C^*$-algebra generated by the Toeplitz operators with $L^{\infty}(\mathbb{C})$ symbols is not commutative. However, we can generate a commutative $C^*$-algebra by restricting the class of symbols. For example, the $C^*$-algebra generated by the Toeplitz operators with radial symbols is commutative \cite{grudsky2002toeplitz}. Moreover, it was shown in \cite{grudsky2002toeplitz} that if $\varphi(z)=a(|z|)$ is a radial symbol, then the Toeplitz operator $T_\varphi$ is unitarily equivalent to a multiplication operator $\lambda_{\varphi}I$, where the eigenvalues $\lambda_\varphi$ are described by 
\[\lambda_\varphi(m)=\dfrac{1}{m!}\int_{\mathbb{R}^+}a(\sqrt{r})r^me^{-r}dr\, ,\, m\in\mathbb{Z}^+.\]

In this paper, we will consider Toeplitz operators on radial weighted Fock spaces over $\mathbb{C}^n$. Let $h:[0,\infty)\rightarrow [0,\infty)$ be a continuous function that satisfies
\begin{equation}\label{int}
    \int\limits_0^\infty r^{2m+2n-1}\,e^{-2h(r)}<\infty
\end{equation}
 for every $m\in\mathbb{N}_0$.

The function $h$ is extended to $\mathbb{C}^n$ by $h(z):=h(|z|)$. 

The space $L_h^2(\mathbb{C}^n)$ is defined as the set of all Lebesgue measurable functions on $\mathbb{C}^n$ that satisfy
\[\norm{f}_h^2=\int_{\mathbb{C}^n}|f(z)|^2e^{-2h(z)} d z<\infty\]
where $d z$ is the Lebesgue measure on $\mathbb{C}^n$.

Let $\mathcal{O}(\mathbb{C}^n)$ be the space of all entire functions on $\mathbb{C}^n$. The weighted radial Fock space is defined as \[ \mathcal{A}_h(\mathbb{C}^n):=L_h^2(\mathbb{C}^n)\cap \mathcal{O}(\mathbb{C}^n).\]
 This is a reproducing kernel Hilbert space. The inner product on $\mathcal{A}_h(\mathbb{C}^n)$ is denoted by $\innerpro{.}{.}_h$, and for $z\in\mathbb{C}^n$, the function $K_z$ represents the reproducing kernel of $\mathcal{A}_h(\mathbb{C}^n)$. Therefore, for any $f\in \mathcal{A}_h(\mathbb{C}^n)$ we have

\[f(z)=\innerpro{f}{K_z}_{h}=\int_{\mathbb{C}^n} f(w)\overline{K_z(w)} e^{-2h(w)}dw \]
The orthogonal Bergman projection $P:L_h^2(\mathbb{C}^n)\rightarrow \mathcal{A}_h(\mathbb{C}^n)$ is given by \[Pf(z)=\innerpro{f}{K_z}_h\] and is a bounded linear operator, where $K_z(w)=K(w,z)$.
Let $\varphi\in L^\infty(\mathbb{C}^n)$, we define the Toeplitz-like operator $T_\varphi: \mathcal{A}_h(\mathbb{C}^n)\rightarrow \mathcal{A}_h(\mathbb{C}^n)$ by 
\[T_\varphi f(z):= P(\varphi f)(z)=\int_{\mathbb{C}^n}\varphi(w)f(w) \overline{K_z(w)}e^{-2h(w)}dw\]
Since the Bergman projection is a bounded linear operator, we have $\norm{T_\varphi}_h\leq \norm{\varphi}_{\infty}$.

One of our main goals in this paper is to use some tools from representation theory to show that under certain conditions the $C^*$-algebra, that is generated by Toeplitz operators that are acting on the radial weighted Fock space $\mathcal{A}_h(\mathbb{C}^n)$ is commutative. Let $\pi:H\rightarrow \mathcal{U}(\mathcal{H})$ be a unitary representation of a Lie group $H$ on a Hilbert space $\mathcal{H}$, and let $\End_H(\mathcal{H})$ be the set of all bounded operators on $\mathcal{H}$ that intertwine with $\pi$. If $H$ is a Lie group of type I domain, then $\End_H(\mathcal{H})$ is commutative if and only if $\pi$ is multiplicity-free \cite{Kobayashi2005}. For example, consider the unitary representation of the group $\U(n)$ on the classical Fock space that is defined by \[ \pi: \U(n)\times \mathcal{F}^2(\mathbb{C}^n)\rightarrow \mathcal{F}^2(\mathbb{C}^n)\] \[ \pi(A)f(z)=f(A^{-1}.z) \]
This is a multiplicity-free representation. However, the Toeplitz operators with symbols invariant under the action of the group $\U(n)$ commute with representation $\pi$, so the $C^*$-algebra that is generated by this type of Toeplitz operators is a subalgebra of $\End_{\U(n)}(\mathcal{F}^2(\mathbb{C}^n))$, and the commutativity of this $C^*$-algebra follows from the commutativity of $\End_{\U(n)}(\mathcal{F}^2(\mathbb{C}^n))$.

In addition to the study of the $C^*$-algebras, we will provide an explicit integral representation of the eigenvalues of the Toeplitz operators with symbols satisfying a certain invariance property. In fact, if a Toeplitz operator intertwines with an irreducible representation, then it is a scalar multiple of the identity operator. This will allow us to provide a spectral representation of the Toeplitz operators that act on $\mathcal{A}_h(\mathbb{C}^n)$.

 In the classical Fock space, Esmeral and Maximenko \cite{esmeral2016radial} showed that the eigenvalue sequences associated with bounded radial symbols are uniformly continuous with respect to the square-root metric
\[
\rho(m,n)=|\sqrt{m}-\sqrt{n}|
\]
and that their uniform closure coincides with the $C^*$-algebra of all bounded sequences that are uniformly continuous with respect to this metric. This result was generalized to quasi-radial symbols on the classical Fock space in higher dimensions \cite{dewage2022toeplitz}. In this paper, we show that this result does not hold for arbitrary radial weights. More precisely, for the logarithmically growing weight
\[
h(r)=
\begin{cases}
0 & :0\leq r\leq 1 \\
\dfrac{1}{2}(\log r)^{3/2} & : 1<r
\end{cases}
\]
we construct a bounded radial symbol $\varphi$ whose eigenvalue sequence satisfies
\[
\lim_{m\rightarrow\infty}
\left|\lambda_{\varphi,m}-(-1)^m\right|=0
\]
Consequently, the corresponding sequence of eigenvalues is not uniformly continuous with respect to the square-root metric. This means that the uniform closure of the set of sequences of all eigenvalues does not coincide with the $C^*$-algebra of bounded sequences that are uniformly continuous with respect to the square-root metric.

\section{Notation}

Let $\mathbb{N}_0=\lbrace 0,1,2,3,...\rbrace$ be the set of nonnegative integers. Throughout the paper, we fix an integer $n\in\mathbb{N}$. Let $\mathbf{m}=(b_1,...,b_k)$ be a partition of the integer $n=b_1+...+b_k$. If $z\in\mathbb{C}^n$, then $z$ can be written as $z=(z_{(1)},...,z_{(k)})\in \mathbb{C}^{b_1}\times...\times \mathbb{C}^{b_k}$, where \[z_{(i)}=(z_{i1},...,z_{ib_i})\]

Let $z\in\mathbb{C}^n$. For every multi-index $r=(r_1,...,r_n)\in\mathbb{N}_0^n$ we will use the following notation 
\begin{align*}
    z^r&=z_1^{r_1}...z_n^{r_n},\\
    |r|&=r_1+...+r_n,\\
    r!&=r_1!...r_n!
\end{align*}

\section{Representation of $\U(n)$ and $\mathbb{T}^n$ on $\mathcal{A}_h(\mathbb{C}^n)$}
The group $\U(n)$ of all $n\times n$ unitary matrices acts on $\mathbb{C}^n$ by \[ \U(n)\times \mathbb{C}^n\rightarrow \mathbb{C}^n\] \[ (A,z)\mapsto A.z \]
This action induces an action of $\U(n)$ on $\mathcal{A}_h(\mathbb{C}^n)$ that is defined by $A.f(z)=f(A^{-1}.z)$. In fact, this action leaves the measure $e^{-2h(z)}.dz$ invariant.
Furthermore, this action gives rise to a unitary representation of $\U(n)$ on $\mathcal{A}_h(\mathbb{C}^n)$ that is defined by \[ \pi : \U(n)\times \mathcal{A}_h(\mathbb{C}^n) \rightarrow \mathcal{A}_h(\mathbb{C}^n) \] \[ \pi(A) f(z)=f(A^{-1}.z)\]

\begin{definition}
    Suppose that $G$ is a group acting on $\mathbb{C}^n$. Then $G$ acts on $L^{\infty}(\mathbb{C}^n)$ by $g.\varphi(z)=\varphi(g^{-1}.z)$. We say that $\varphi$ is $G$-invariant if $g.\varphi(z)=\varphi(z)$ for all $g\in G$. The class of all symbols in $L^{\infty}(\mathbb{C}^n)$ that are invariant under the action of $G$ is denoted by $L^{\infty}(\mathbb{C}^n)^{G}$.
\end{definition}

The following lemma shows how the group $\U(n)$ acts on the kernel of $\mathcal{A}_h(\mathbb{C}^n)$. We will need this lemma to show that Toeplitz operators with symbols that are invariant under the action of $\U(n)$ intertwine with the representation $\pi$.
\begin{lemma}
Let $A\in\U(n)$. Then the kernel $K_z(w)$ satisfies $ K_z(A.w)=K_{A^{-1}.z}(w)$.
\begin{proof}
    If $f\in \mathcal{A}_h(\mathbb{C}^n)$, then we have 
    \begin{align*}
        \innerpro{f}{\pi(A^{-1})K_z}_h&=\innerpro{\pi(A)f}{K_z}_h= \pi(A)f(z)\\
        &=f(A^{-1}z)\\
        &=\innerpro{f}{K_{A^{-1}z}}_h
    \end{align*}
    Since $f$ was arbitrary chosen, we have $\pi(A^{-1})K_z(w)=K_{A^{-1}z}(w)$. Therefore, $K_z(A.w)=K_{A^{-1}z}(w)$
\end{proof}
    
\end{lemma}
The following result shows how a Toeplitz operator with symbol $\varphi$ interacts with the representation $\pi$.
\begin{proposition} \label{Toeplitz and rep}
    Let $\varphi\in L^{\infty}(\mathbb{C}^n)$. We have \[T_{A.\varphi}\pi(A)=\pi(A)T_{\varphi}\]

     \begin{proof}
        \begin{align*}
             T_{A.\varphi}\pi(A)f(z)=T_{A.\varphi}f(A^{-1}.z)&=\int (A.\varphi)(w) f(A^{-1}.w)\overline{K_z(w)}e^{-2h(w)}dw\\
             &= \int \varphi(A^{-1}.w) f(A^{-1}.w)\overline{K_z(w)}e^{-2h(w)}dw\\
             &= \int \varphi(w) f(w)\overline{K_{z}(A.w)}e^{-2h(A.w)}dw\\
             &= \int \varphi(w) f(w)\overline{K_{A^{-1}.z}(w)}e^{-2h(w)}dw\\
             &= \pi(A) T_{\varphi}f(z)
        \end{align*}
    \end{proof}
\end{proposition}

We now have the following corollary:
\begin{corollary}
    
\label{Toeplitz operator intertwine with the representation}
    Let $\varphi\in L^{\infty}(\mathbb{C}^n)^{U(n)}$. The Toeplitz operator $T_{\varphi}$ intertwines with the unitary representation $\pi$. 
    
\end{corollary}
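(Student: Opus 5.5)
The corollary follows directly from Proposition~\ref{Toeplitz and rep}, once we unpack what invariance means for the symbol. Fix $\varphi\in L^{\infty}(\mathbb{C}^n)^{\U(n)}$ and $A\in\U(n)$.

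\emph{Step 1.} By the definition of $\U(n)$-invariance, $(A.\varphi)(z)=\varphi(A^{-1}.z)=\varphi(z)$ for every $A\in\U(n)$. Strictly, since $\varphi$ lies in $L^\infty$, this holds for almost every $z$. So $A.\varphi=\varphi$ as elements of $L^{\infty}(\mathbb{C}^n)$.

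\emph{Step 2.} Equal elements of $L^\infty$ give the same Toeplitz operator. Indeed, $T_\varphi f=P(\varphi f)$ depends only on the class of $\varphi f$ in $L^2_h(\mathbb{C}^n)$, and the weighted measure $e^{-2h}dz$ is absolutely continuous with respect to Lebesgue measure. Hence $T_{A.\varphi}=T_\varphi$.

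\emph{Step 3.} Substituting this into the identity $T_{A.\varphi}\pi(A)=\pi(A)T_\varphi$ from Proposition~\ref{Toeplitz and rep} gives
\[T_\varphi\pi(A)=\pi(A)T_\varphi\quad\text{for all }A\in\U(n).\]
Together with $\norm{T_\varphi}_h\leq\norm{\varphi}_\infty$, this says $T_\varphi\in\End_{\U(n)}(\mathcal{A}_h(\mathbb{C}^n))$, which is the claim.

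The argument has no real obstacle. The only point needing care is Step 2, namely that invariance holding only almost everywhere still forces equality of the Toeplitz operators.
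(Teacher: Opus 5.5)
Your proof is correct and follows the same route as the paper, which states the corollary with no separate proof as an immediate consequence of Proposition~\ref{Toeplitz and rep} (set $A.\varphi=\varphi$ in $T_{A.\varphi}\pi(A)=\pi(A)T_\varphi$). Your Step~2 adds a point the paper leaves implicit: invariance holding only almost everywhere still gives $T_{A.\varphi}=T_\varphi$.
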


We realize the group $\mathbb{T}^n$ as a subgroup of $\U(n)$ by viewing its elements as diagonal matrices in $\U(n)$. In fact, the restriction $\pi|_{\mathbb{T}^n}$ is a unitary representation of $\mathbb{T}^n$ on $\mathcal{A}_h(\mathbb{C}^n)$.

We now have the following result:
\begin{corollary}
    Let $\varphi\in L^{\infty}(\mathbb{C}^n)^{\mathbb{T}^n}$. The Toeplitz operator $T_{\varphi}$ intertwines with the unitary representation $\pi|_{\mathbb{T}^n}$. 
    
\end{corollary}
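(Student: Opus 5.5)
The plan is to obtain this directly from Proposition~\ref{Toeplitz and rep}, exactly as Corollary~\ref{Toeplitz operator intertwine with the representation} follows from it, but applied only to elements of the diagonal subgroup. Fix $\varphi\in L^{\infty}(\mathbb{C}^n)^{\mathbb{T}^n}$ and an element $t=\diag(t_1,\dots,t_n)\in\mathbb{T}^n$, viewed as a diagonal matrix in $\U(n)$. Since $t\in\U(n)$, Proposition~\ref{Toeplitz and rep} holds with $A=t$ and gives
\[
T_{t.\varphi}\,\pi(t)=\pi(t)\,T_{\varphi}.
\]

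Next I will use the invariance hypothesis. By definition, $\varphi$ being $\mathbb{T}^n$-invariant means $t.\varphi(z)=\varphi(t^{-1}.z)=\varphi(z)$ for all $t\in\mathbb{T}^n$, at least almost everywhere. Therefore $t.\varphi=\varphi$ as elements of $L^\infty(\mathbb{C}^n)$, and so $T_{t.\varphi}=T_\varphi$. A Toeplitz operator depends only on the almost-everywhere class of its symbol, because the symbol enters only through an integral. Substituting this into the displayed identity gives $T_\varphi\,\pi|_{\mathbb{T}^n}(t)=\pi|_{\mathbb{T}^n}(t)\,T_\varphi$ for every $t\in\mathbb{T}^n$, which is the intertwining property.

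Two routine points remain. First, $\pi|_{\mathbb{T}^n}$ is a unitary representation, since it is the restriction of the unitary representation $\pi$ of $\U(n)$ to a closed subgroup. Second, $T_\varphi$ is bounded, with $\norm{T_\varphi}_h\leq\norm{\varphi}_\infty$, so $T_\varphi$ lies in $\End_{\mathbb{T}^n}(\mathcal{A}_h(\mathbb{C}^n))$.

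There is no real obstacle. The only point needing care is to read the invariance as holding almost everywhere, so that $t.\varphi=\varphi$ in $L^\infty$, and to note that the change of variables behind Proposition~\ref{Toeplitz and rep} is valid for every $A\in\U(n)$, in particular for diagonal unitary matrices.
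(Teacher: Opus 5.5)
Your proposal is correct and takes the same route as the paper. The paper states this corollary without a separate proof, as an immediate consequence of Proposition~\ref{Toeplitz and rep} applied to diagonal $A=t\in\mathbb{T}^n\subset\U(n)$: invariance gives $t.\varphi=\varphi$, hence $T_\varphi\pi(t)=\pi(t)T_\varphi$.
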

\section{Decomposition of $\mathcal{A}_h(\mathbb{C}^n)$}
Let $\mathcal{P}(\mathbb{C}^n)$ denote the space of holomorphic polynomials on $\mathbb{C}^n$, and $P^m( \mathbb{C}^n )$ denote the subspace of homogeneous polynomials of degree $m$. Each $\mathcal{P}^m(\mathbb{C}^n)$ is an invariant irreducible $U(n)$-module. The highest weight of $P^{m}(\mathbb{C}^n)$ is \[(0,0,...,0,-m)\] Hence,
if $m_1\neq m_2$, then $P^{m_1}( \mathbb{C}^n )$ and $P^{m_2}( \mathbb{C}^n )$ have different highest weights and therefore they are inequivalent $U(n)$-representations. So, the representation $\pi$ decomposes into inequivalent irreducible subrepresentations. Therefore, $\pi$ is multiplicity-free. Moreover, since the holomorphic polynomials are dense in $\mathcal{A}_h(\mathbb{C}^n)$, and the homogeneous polynomials are mutually orthogonal, we have the isotypic decomposition of the representation $\pi$ of $\U(n)$ \[ \mathcal{A}_h(\mathbb{C}^n)=\bigoplus_{m=0}^{\infty} P^m( \mathbb{C}^n )\]

 If $A=\diag(d_1,...,d_n)\in\mathbb{T}^n$, then $\pi(A)z^m=(A^{-1}z)^m=A^{-m}z^m$. So, the isotypic decomposition of $\pi|_{\mathbb{T}^n}$, the restriction of the representation $\pi$ to $\mathbb{T}^n$, is given by \[\mathcal{A}_h(\mathbb{C}^n)=\bigoplus_{m\in\mathbb{N}_0}\mathbb{C}z^{m}\] Since the irreducible subrepresentations $\mathbb{C}z^{m}$ are inequivalent for different values of $m\in\mathbb{N}_0$, then $\pi|_{\mathbb{T}^n}$ is multiplicity-free.

The following two results summarize the previous discussion; we will need them to construct commuting families of $C^*$-algebras.

\begin{proposition} \label{Isotypic decomposiyion under U(n)}
     The representation $\pi$ of the group $\U(n)$ on $\mathcal{A}_h(\mathbb{C}^n)$ is multiplicity-free, and the isotypic decomposition is given by \[\mathcal{A}_h(\mathbb{C}^n)=\bigoplus_{m=0}^{\infty} P^m( \mathbb{C}^n ) \]
\end{proposition}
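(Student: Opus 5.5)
The plan has three steps: establish the orthogonal direct sum decomposition, show each summand is an irreducible $\U(n)$-submodule, and show that the summands are pairwise inequivalent.

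\textbf{Orthogonality.} The weight $e^{-2h(z)}$ depends only on $|z|$, so integrating in polar coordinates on each complex variable gives, for multi-indices $r\neq s$,
\[
\innerpro{z^r}{z^s}_h=\int_{\mathbb{C}^n} z^r\overline{z^s}\,e^{-2h(|z|)}\,dz=0 .
\]
This follows by first averaging over the torus $\mathbb{T}^n$, which preserves both the measure and the weight. Condition \eqref{int} guarantees that every monomial lies in $\mathcal{A}_h(\mathbb{C}^n)$. Hence the monomials form an orthogonal system, and in particular $P^{m_1}(\mathbb{C}^n)\perp P^{m_2}(\mathbb{C}^n)$ for $m_1\neq m_2$.

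\textbf{Completeness.} Let $f\in\mathcal{A}_h(\mathbb{C}^n)$ have Taylor expansion $f=\sum_r c_r z^r$. For each multi-index $r$ I will show
\[
\innerpro{f}{z^r}_h=c_r\norm{z^r}_h^2 .
\]
To do this, integrate over the polydisc-shell region $|z|<R$. On that region the series converges uniformly on compacta, so term-by-term integration combined with the orthogonality above gives the formula for the truncated integral. Letting $R\to\infty$, dominated convergence applies since $f\overline{z^r}e^{-2h}\in L^1$. Consequently, if $f$ is orthogonal to all monomials then every $c_r=0$ and $f=0$. So polynomials are dense, and grouping monomials by total degree yields the Hilbert direct sum
\[
\mathcal{A}_h(\mathbb{C}^n)=\bigoplus_{m\ge 0} P^m(\mathbb{C}^n).
\]
This density step is the main technical point, and the exhaustion by balls is what handles it.

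\textbf{Invariance and irreducibility.} Each $P^m(\mathbb{C}^n)$ is invariant under $\pi$, because composing with the linear map $A^{-1}$ preserves homogeneous degree. Irreducibility is the standard fact, already invoked in the text, that $P^m(\mathbb{C}^n)$ is the irreducible $\U(n)$-module with highest weight $(0,\dots,0,-m)$. If one prefers to avoid this fact, there is a direct argument. Suppose $W\subseteq P^m(\mathbb{C}^n)$ is a nonzero invariant subspace. Since $W$ is $\mathbb{T}^n$-invariant and the $\mathbb{T}^n$-characters of distinct monomials of degree $m$ are distinct, $W$ is spanned by monomials. Then invariance under the $\mathrm{SU}(2)$ subgroups acting on pairs of coordinates, equivalently under the Lie algebra operators $z_i\partial_j$, moves any monomial of degree $m$ to any other. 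Hence $W=P^m(\mathbb{C}^n)$.

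\textbf{Inequivalence.} The central subgroup $\{e^{i\theta}I\}\subseteq\U(n)$ acts on $P^m(\mathbb{C}^n)$ by the scalar $e^{-im\theta}$. Different $m$ give different central characters, so $P^{m_1}(\mathbb{C}^n)$ and $P^{m_2}(\mathbb{C}^n)$ are inequivalent for $m_1\neq m_2$. Each irreducible type therefore occurs exactly once, so $\pi$ is multiplicity-free and the decomposition above is its isotypic decomposition.
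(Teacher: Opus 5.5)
Your proof is correct and has the same structure as the paper's argument: orthogonality of the homogeneous pieces, density of polynomials, irreducibility of each $P^m(\mathbb{C}^n)$ (which the paper simply cites via the highest weight $(0,\dots,0,-m)$), and pairwise inequivalence. The differences are that you actually prove the orthogonality and density that the paper only asserts, via torus averaging and the identity $\langle f, z^r\rangle_h = c_r\|z^r\|_h^2$ obtained by exhausting $\mathbb{C}^n$ with balls, and that you separate the summands by the central character $e^{-im\theta}$ rather than by highest weights, which is a more elementary but equivalent check.
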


\begin{proposition}
     The representation $\pi|_{\mathbb{T}^n}$ of the group $\mathbb{T}^n$ on $\mathcal{A}_h(\mathbb{C}^n)$ is multiplicity-free, and the isotypic decomposition is given by \[\mathcal{A}_h(\mathbb{C}^n)=\bigoplus_{m\in\mathbb{N}_0^n}\mathbb{C}z^{m} \]
\end{proposition}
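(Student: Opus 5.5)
The plan has three steps: show the monomials form an orthogonal basis of $\mathcal{A}_h(\mathbb{C}^n)$, identify each line $\mathbb{C}z^{m}$ as a character of $\mathbb{T}^n$, and check that distinct multi-indices give distinct characters.

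\emph{Orthogonality.} Write $w_j=r_je^{i\theta_j}$. Since the weight $e^{-2h(|w|)}$ depends only on $|w|$, it is invariant under each rotation $\theta_j\mapsto\theta_j+t$. Hence for $m\neq m'$ in $\mathbb{N}_0^n$
\[
\innerpro{z^m}{z^{m'}}_h=\int_{\mathbb{C}^n} w^m\overline{w^{m'}}e^{-2h(w)}\,dw
\]
contains the factor $\int_0^{2\pi}e^{i(m_j-m'_j)\theta_j}\,d\theta_j=0$ for any index $j$ with $m_j\neq m'_j$, so the inner product vanishes. The integrability condition \eqref{int} guarantees $\norm{z^m}_h<\infty$, so every monomial lies in $\mathcal{A}_h(\mathbb{C}^n)$.

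\emph{Completeness.} Holomorphic polynomials are dense in $\mathcal{A}_h(\mathbb{C}^n)$; this is already used in Proposition \ref{Isotypic decomposiyion under U(n)}. For a self-contained argument I would expand $f\in\mathcal{A}_h(\mathbb{C}^n)$ in its Taylor series $f=\sum_m c_m z^m$. Integrating over the torus $\{|w_j|=r_j\}$ and then over the radii gives Parseval's identity
\[
\norm{f}_h^2=\sum_m |c_m|^2\norm{z^m}_h^2.
\]
Here the interchange of sum and integral is justified by monotone convergence on increasing polydisks. It follows that the partial sums converge to $f$ in norm, and so $\{z^m\}_{m\in\mathbb{N}_0^n}$ is an orthogonal basis. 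This justification is the only step requiring care, and it is the main one.

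\emph{Representation theory.} For $A=\diag(d_1,\dots,d_n)\in\mathbb{T}^n$ we have $\pi(A)z^m=\bar d_1^{\,m_1}\cdots\bar d_n^{\,m_n}\,z^m$. So each line $\mathbb{C}z^m$ is a one-dimensional invariant subspace on which $\mathbb{T}^n$ acts by the character $\chi_m(d)=d^{-m}$. Characters of $\mathbb{T}^n$ with distinct exponent vectors are distinct, so the subrepresentations $\mathbb{C}z^m$ are pairwise inequivalent.

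It follows that the orthogonal decomposition $\mathcal{A}_h(\mathbb{C}^n)=\bigoplus_{m\in\mathbb{N}_0^n}\mathbb{C}z^m$ is the isotypic decomposition of $\pi|_{\mathbb{T}^n}$, with every irreducible type occurring at most once. Therefore $\pi|_{\mathbb{T}^n}$ is multiplicity-free.
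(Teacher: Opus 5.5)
Your proof is correct and follows the same route as the paper. Both compute $\pi(A)z^m=d^{-m}z^m$ for $A=\diag(d_1,\dots,d_n)$, observe that distinct multi-indices give distinct characters of $\mathbb{T}^n$, and combine this with the decomposition of $\mathcal{A}_h(\mathbb{C}^n)$ into the lines $\mathbb{C}z^m$. The only difference is that you prove orthogonality and completeness of the monomials yourself (angular integration, then the torus Parseval identity with Tonelli), whereas the paper simply assumes them from the density of polynomials.
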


Let $\End_{\U(n)}(\mathcal{A}_h(\mathbb{C}^n))$ be the set of all bounded operators on $\mathcal{A}_h(\mathbb{C}^n)$ that intertwine with the representation $\pi$. The space $\End_{\U(n)}(\mathcal{A}_h(\mathbb{C}^n))$ is commutative if and only if the representation $\pi$ is multiplicity-free \cite{Kobayashi2005}. The $C^*$-algebra generated by the Toeplitz operators with $\U(n)$ invariant symbols will be denoted by $\mathcal{T}(\mathcal{A}_h(\mathbb{C}^n))^{\U(n)}$. The following proposition shows that this $C^*$-algebra is commutative.

\begin{proposition}
    The $C^*$-algebra $\mathcal{T}(\mathcal{A}_h(\mathbb{C}^n))^{\U(n)}$ that is generated by Toeplitz operators with symbols invariant under $\U(n)$ is commutative.
    \begin{proof}
       The proof follows directly from Corollary \ref{Toeplitz operator intertwine with the representation}. If a symbol $\varphi$ is invariant under the action of $\U(n)$, then the Toeplitz operator $T_{\varphi}$ intertwines with the representation $\pi$.
       Therefore, \[ \mathcal{T}(\mathcal{A}_h(\mathbb{C}^n))^{\U(n)}\subset End_{\U(n)}(\mathcal{A}_h(\mathbb{C}^n)).\]
       Since $End_{\U(n)}(\mathcal{A}_h(\mathbb{C}^n))$ is commutative, we have $\mathcal{T}(\mathcal{A}_h(\mathbb{C}^n))^{\U(n)}$ is also commutative.
    \end{proof}
\end{proposition}

A similar argument can be used to prove the following result.

\begin{proposition}
    The $C^*$-algebra $\mathcal{T}(\mathcal{A}_h(\mathbb{C}^n))^{\mathbb{T}^n}$ that is generated by Toeplitz operators with symbols invariant under $\mathbb{T}^n$ is commutative.
\end{proposition}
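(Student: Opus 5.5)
The plan mirrors the argument for $\mathcal{T}(\mathcal{A}_h(\mathbb{C}^n))^{\U(n)}$, with $\U(n)$ replaced by the torus $\mathbb{T}^n$ realized as diagonal unitary matrices. By the corollary for $\mathbb{T}^n$-invariant symbols, every $T_\varphi$ with $\varphi\in L^{\infty}(\mathbb{C}^n)^{\mathbb{T}^n}$ intertwines with $\pi|_{\mathbb{T}^n}$. This follows from Proposition \ref{Toeplitz and rep}, since $A.\varphi=\varphi$ for $A\in\mathbb{T}^n$. The intertwiners $\End_{\mathbb{T}^n}(\mathcal{A}_h(\mathbb{C}^n))$ form a $C^*$-algebra: the set is closed under sums, products and norm limits, and under adjoints because each $\pi(A)$ is unitary, so $T\pi(A)=\pi(A)T$ implies $T^*\pi(A)=\pi(A)T^*$. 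Hence
\[\mathcal{T}(\mathcal{A}_h(\mathbb{C}^n))^{\mathbb{T}^n}\subset \End_{\mathbb{T}^n}(\mathcal{A}_h(\mathbb{C}^n)).\]

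The remaining step is to show that $\End_{\mathbb{T}^n}(\mathcal{A}_h(\mathbb{C}^n))$ is commutative. One route is to cite \cite{Kobayashi2005} together with the proposition that $\pi|_{\mathbb{T}^n}$ is multiplicity-free. I would instead argue directly, which avoids any type I hypothesis.

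For the direct argument, let $T$ be an intertwiner and fix a multi-index $r\in\mathbb{N}_0^n$. The monomial $z^r$ lies in the isotypic component of the character $\chi_r(A)=A^{-r}$, that is, $\pi(A)z^r=A^{-r}z^r$. Applying $T$ gives $\pi(A)Tz^r=A^{-r}Tz^r$. Expand $Tz^r=\sum_s c_s z^s$ in the orthogonal basis of monomials; the monomials are orthogonal because the weight is radial, so it is invariant under the torus. Comparing coefficients gives $c_sA^{-s}=c_sA^{-r}$ for every $A\in\mathbb{T}^n$. Distinct multi-indices give distinct characters of $\mathbb{T}^n$, so $c_s=0$ for $s\neq r$. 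Therefore $Tz^r=\lambda_r z^r$, and every intertwiner is diagonal in the orthogonal basis $\{z^r\}_{r\in\mathbb{N}_0^n}$.

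Diagonal operators in a common orthogonal basis commute on the dense span of the polynomials, and by boundedness they commute on all of $\mathcal{A}_h(\mathbb{C}^n)$. This gives commutativity of the intertwiner algebra, and hence of $\mathcal{T}(\mathcal{A}_h(\mathbb{C}^n))^{\mathbb{T}^n}$. The only delicate points are the density of polynomials and the orthogonality of monomials, both already used in the preceding decomposition, so I expect no real obstacle.
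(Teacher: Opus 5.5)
Your proof is correct and follows the paper's route. For this proposition the paper only says ``a similar argument'' to the $\U(n)$ case applies, i.e.\ the Toeplitz operators with $\mathbb{T}^n$-invariant symbols lie in $\End_{\mathbb{T}^n}(\mathcal{A}_h(\mathbb{C}^n))$, which is commutative. The one difference is in that last step: the paper deduces commutativity of $\End_{\mathbb{T}^n}(\mathcal{A}_h(\mathbb{C}^n))$ from the multiplicity-freeness of $\pi|_{\mathbb{T}^n}$ together with \cite{Kobayashi2005}, whereas you prove it directly by showing every intertwiner is diagonal in the orthogonal monomial basis, which makes your argument self-contained.
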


\section{$T_{\varphi}$ with $k$-Quasi-Radial Symbols}

    Let $\mathbf{m}=(b_1,...,b_k)$ be a partition of an integer $n=b_1+...+b_k$. For every $z\in\mathbb{C}^n$ we write $z=(z_{(1)},...,z_{(k)})\in \mathbb{C}^{b_1}\times...\times \mathbb{C}^{b_k}$. Consider the group $\mathbf{U_m}:=U(b_1)\times...\times U(b_k)$. This group can be realized as a block diagonal subgroup of $U(n)$ with action on $f(z)$ given by \[(A_1,...,A_k).f(z)=f(A_1^{-1}z_{(1)},...,A_k^{-1}z_{(k)})\]
\begin{definition}[\cite{quiroga2021toeplitz}]
    Let $\mathbf{m}=(b_1,...,b_k)$ be a partition of an integer $n=b_1+...+b_k$. A symbol $\varphi\in L^{\infty}(\mathbb{C}^n)$ is called a $k$-quasi-radial symbol if $\varphi$ is invariant under the action of the group $\mathbf{U_m}$.
\end{definition}

Let $\mathbf{s}:=(s_1,...,s_k)\in\mathbb{N}_0^k$. For each $i=1,2,...,k$, the space $P^{s_i}(\mathbb{C}^{b_i})$ of homogeneous holomorphic polynomials of degree $s_i$ is an irreducible $\U(b_i)$-module. Let $\pi_{b_i,s_i}$ be the unitary representation of $\U(b_i)$ on $P^{s_i}(\mathbb{C}^{b_i})$. Consider the outer tensor product \[\pi_{\mathbf{m,s}}:=\pi_{b1,s_1}\otimes...\otimes \pi_{b_k,s_k}\] which is a representation of $\mathbf{U_m}$ on 
\[P^{s_1}(\mathbb{C}^{b_1})\otimes...\otimes P^{s_k}(\mathbb{C}^{b_k}).\]
The representation is defined by
\begin{equation}
\label{representation of U_m}
    (\pi_{\mathbf{m,s}}(A_1,...,A_k)f)(z)=f(A_1^{-1}z_{(1)},...,A_k^{-1}z_{(k)})
\end{equation}

Since each $\pi_{b_i,s_i}$ is irreducible, their outer tensor product $\pi_{\mathbf{m,s}}$ is an irreducible representation of $U(b_1)\times...\times U(b_k)$. We also consider the representation $\pi_{\mathbf{m}}$ of $\mathbf{U_m}$ acting on the weighted Fock space $\mathcal{A}_h(\mathbb{C}^n)$, where the action is also given by Equation \ref{representation of U_m}.

The following proposition provides the isotypic decomposition of the representation $\pi_{\mathbf{m}}$. Moreover, it shows that this representation is multiplicty free. We will need this result later to construct commuting families of $C^*$-algebras.

\begin{proposition}
\label{Isotypic decomposition of U_m}
    The representation $\pi_{\mathbf{m}}$ of the group \[\mathbf{U_m}=U(b_1)\times...\times U(b_k)\] on $\mathcal{A}_h(\mathbb{C}^n)$ is multiplicity-free, and the isotypic decomposition is given by \[ \mathcal{A}_h(\mathbb{C}^n)=\bigoplus_{s_1,...,s_k=0}^{\infty} P^{s_1}(\mathbb{C}^{b_1})\otimes...\otimes P^{s_k}(\mathbb{C}^{b_k}) \] 

    \begin{proof}
        For each $i=1,2,...,k$, the space $P^{s_i}(\mathbb{C}^{b_i})$ is invariant under the action of the group $\U(b_i)$. Hence, the space $P^{s_1}(\mathbb{C}^{b_1})\otimes...\otimes P^{s_k}(\mathbb{C}^{b_k})$ is invariant under the action of the group $U(b_1)\times...\times U(b_k)$. Moreover, since the outer tensor product $\pi_{b1,s_1}\otimes...\otimes \pi_{b_k,s_k}$ is an irreducible representation and the monomials are dense in $\mathcal{A}_h(\mathbb{C}^n)$, the Hilbert space $\mathcal{A}_h(\mathbb{C}^n)$ decomposes into a direct sum of irreducible submodules.

        Now, we will show that the spaces $P^{s_1}(\mathbb{C}^{b_1})\otimes...\otimes P^{s_k}(\mathbb{C}^{b_k})$ are mutually orthogonal. Let $\mathbf{r,s}\in\mathbb{N}_0^k$ with $\mathbf{r}\neq\mathbf{s}$. There exists $j$ such that $s_j\neq r_j$. Let $t\in\mathbb{R}$ and consider the element 
        \[A=(I_{b_1},...,e^{it}I_{b_j},...,I_{b_k})\in \mathbf{U_m}\]
Let $f\in P^{s_1}(\mathbb{C}^{b_1})\otimes...\otimes P^{s_k}(\mathbb{C}^{b_k})$ and $g\in P^{r_1}(\mathbb{C}^{b_1})\otimes...\otimes P^{r_k}(\mathbb{C}^{b_k})$. We have
\begin{align*}
    \innerpro{f}{g}_h&=\innerpro{\pi_\mathbf{m}(A)f}{\pi_\mathbf{m}(A)g}_h\\
    &=\innerpro{e^{-its_j}f}{e^{-itr_j}g}_h\\
    &=e^{it(r_j-s_j)}\innerpro{f}{g}_h
\end{align*}
If we choose any $t\in\mathbb{R}$ with $e^{it(r_j-s_j)}\neq 1$, then $\innerpro{f}{g}_h=0$. Therefore, the spaces \[P^{s_1}(\mathbb{C}^{b_1})\otimes...\otimes P^{s_k}(\mathbb{C}^{b_k})\] are mutually orthogonal.

Furthermore, the highest weight of the irreducible $\mathbf{U_m}$-module \[P^{s_1}(\mathbb{C}^{b_1})\otimes...\otimes P^{s_k}(\mathbb{C}^{b_k})\]
is

\[ \gamma_{\mathbf{s}}:=\big( (0,0,...\,,0,-s_1),..., (0,0,...,0,\,-s_k)\big)\]
where the $i$th tuple has $b_i$ entries. If $\mathbf{s\neq r}$, then $\gamma_{\mathbf{s}}\neq \gamma_{\mathbf{r}}$. Hence, the subspaces \[P^{s_1}(\mathbb{C}^{b_1})\otimes...\otimes P^{s_k}(\mathbb{C}^{b_k}) \text{ and } P^{r_1}(\mathbb{C}^{b_1})\otimes...\otimes P^{r_k}(\mathbb{C}^{b_k})\] are  inequivalent $\mathbf{U_m}$-submodules. Thus, the decomposition of $\mathcal{A}_h(\mathbb{C}^n)$ consists of pairwise inequivalent irreducible $\mathbf{U_m}$-submodules, and therefore the representation $\pi_{\mathbf{m}}$ is multiplicity-free.

    \end{proof}
\end{proposition}

For the proof of the following proposition, it suffices to note that $e^{-2h(z)}dz_{(1)}...dz_{(k)}$ is invariant under the action of the group $U(b_1)\times...\times U(b_k)$, and that the kernel satisfies the relation \[K_z\left((A_1,...,A_k).(w_{(1)},...,w_{(k)})\right)=K_{\left( A_1^{-1}z_{(1)},...,A_k^{-1}z_{(k)}\right)}(w)\] Now the proof follows from an argument similar to the proof of Proposition \ref{Toeplitz operator intertwine with the representation}.
\begin{proposition}
\label{Toeplitz operator intertwine, quasi-radial}
    Let $\varphi\in L^{\infty}(\mathbb{C}^n)$. If $\varphi$ is invariant under the action of $U(b_1)\times...\times U(b_k)$, then $T_{\varphi}$ intertwines with the representation $\pi_{b1}\otimes...\otimes \pi_{b_k}$.

\end{proposition}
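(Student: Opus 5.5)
The plan is to reproduce the computation in Proposition \ref{Toeplitz and rep} with $\U(n)$ replaced by the block diagonal subgroup $\mathbf{U_m}$. Concretely, we show that for every $A=(A_1,\dots,A_k)\in\mathbf{U_m}$ and every $\varphi\in L^\infty(\mathbb{C}^n)$,
\[
T_{A.\varphi}\,\pi_{\mathbf{m}}(A)=\pi_{\mathbf{m}}(A)\,T_\varphi .
\]
When $\varphi$ is $\mathbf{U_m}$-invariant we have $A.\varphi=\varphi$, so this identity reads $T_\varphi\pi_{\mathbf{m}}(A)=\pi_{\mathbf{m}}(A)T_\varphi$. That is exactly the claim that $T_\varphi$ intertwines with $\pi_{\mathbf{m}}$, which is the representation written $\pi_{b_1}\otimes\cdots\otimes\pi_{b_k}$ in the statement.

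The first step is to observe that $\mathbf{U_m}$ is a subgroup of $\U(n)$: the element $A$ acts as the block diagonal unitary $\diag(A_1,\dots,A_k)$. Its action $z\mapsto(A_1z_{(1)},\dots,A_kz_{(k)})$ is exactly the action of this matrix on $\mathbb{C}^n$, so $\pi_{\mathbf{m}}=\pi|_{\mathbf{U_m}}$. Two consequences follow. Since $|A.w|=|w|$ and unitary maps preserve Lebesgue measure, the measure $e^{-2h(w)}dw$ is $\mathbf{U_m}$-invariant. Moreover, the lemma $K_z(A.w)=K_{A^{-1}.z}(w)$ holds for $A\in\U(n)$, so it holds in particular for block diagonal $A$; this is the displayed kernel relation in the remark before the statement.

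The second step is the chain of integrals. We write out $T_{A.\varphi}\pi_{\mathbf{m}}(A)f(z)$ as an integral over $\mathbb{C}^n$ and substitute $w\mapsto A.w$. Using the invariance of the weight and of Lebesgue measure, the integrand becomes $\varphi(w)f(w)\overline{K_z(A.w)}e^{-2h(w)}$. By the kernel relation this equals $\varphi(w)f(w)\overline{K_{A^{-1}.z}(w)}e^{-2h(w)}$, and integrating gives $T_\varphi f(A^{-1}.z)=\pi_{\mathbf{m}}(A)T_\varphi f(z)$. Alternatively, one can simply invoke Proposition \ref{Toeplitz and rep} directly for the matrix $\diag(A_1,\dots,A_k)\in\U(n)$, which makes the argument essentially one line.

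There is no real obstacle here. The only point requiring care is the identification of the block action with matrix multiplication by $\diag(A_1,\dots,A_k)$, together with the convention $\pi(A)f=f(A^{-1}\cdot)$. This identification is what allows the earlier lemma and proposition to be reused verbatim.
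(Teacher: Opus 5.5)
Your proposal is correct and matches the paper's argument. The paper also notes that $e^{-2h(w)}dw$ is $\mathbf{U_m}$-invariant and that $K_z(A.w)=K_{A^{-1}.z}(w)$ holds for block-diagonal $A$, then repeats the change-of-variables computation of Proposition \ref{Toeplitz and rep}; your identification $\pi_{\mathbf{m}}=\pi|_{\mathbf{U_m}}$ through $\diag(A_1,\dots,A_k)\in\U(n)$ is exactly what justifies that reuse.
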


The following result generalizes Corollary 2.8 of \cite{dewage2022toeplitz}. It shows that the $C^*$-algebra generated by Toeplitz operators with symbols invariant under $U(b_1)\times...\times U(b_k)$ is commutative.  
\begin{proposition}
    Let $\mathbf{U_m} :=U(b_1)\times...\times U(b_k)$. Then the $C^*$-algebra $\mathcal{T}(\mathcal{A}_h(\mathbb{C}^n))^{\mathbf{U_m}}$, generated by Toeplitz operators with symbols that are invariant under $\mathbf{U_{m}}$, is commutative.

    \begin{proof}
    By Proposition \ref{Isotypic decomposition of U_m}, the representation $\pi_{b1}\otimes...\otimes \pi_{b_k}$ of the group $\mathbf{U_m}$ is multiplicity-free. So, the space $End_{\mathbf{U_m}}(\mathcal{A}_h(\mathbb{C}^n))$ is commutative. If $\varphi$ is $\mathbf{U_m}$ invariant, then the Toeplitz operator $T_\varphi$ intertwines with the representation $\pi_{b1}\otimes...\otimes \pi_{b_k}$. So, we have  \[ \mathcal{T}(\mathcal{A}_h(\mathbb{C}^n))^{\mathbf{U_m}}\subset \End_{\mathbf{U_m}}(\mathcal{A}_h(\mathbb{C}^n)). \] Therefore, the $C^*$-algebra $\mathcal{T}(\mathcal{A}_h(\mathbb{C}^n))^{\mathbf{U_m}}$ is commutative.
      
    \end{proof}
\end{proposition}

\section{Spectral representation of Toeplitz operators}
In this section we will describe the spectral representation of Toeplitz operators with symbols that are invariant under $\U(n)$ and $\mathbf{U_m}$. In particular, we will use Schur's lemma to write the spectrum of a Toeplitz operator $T_\varphi$ as an integral formula that involves the symbol $\varphi$.

\begin{theorem}[Schur's Lemma]

Let $\pi$ be a unitary representation of a group $G$, and let $\mathcal{C}(\pi)$ be the space of bounded operators on $\mathcal{H}_{\pi}$ that intertwine with $\pi$. The representation $\pi$ is irreducible if and only if $\mathcal{C}(\pi)$ consists only of scalar multiples of the identity.

\end{theorem}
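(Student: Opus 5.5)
The plan is to prove the two directions separately. The forward direction is elementary and does not need the spectral theorem. The converse needs a nontrivial invariant projection, which I will obtain from the spectral theorem applied to a self-adjoint intertwiner. Throughout I use that $\mathcal{C}(\pi)$ is a von Neumann algebra: it is closed under adjoints, since $T\pi(g)=\pi(g)T$ for all $g$ gives $\pi(g)^{*}T^{*}=T^{*}\pi(g)^{*}$ and $\pi(g)^{*}=\pi(g^{-1})$. It is also weakly closed.

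\emph{Scalars imply irreducible.} Suppose $\mathcal{C}(\pi)=\mathbb{C}I$ and let $M\subseteq\mathcal{H}_\pi$ be a closed invariant subspace. By unitarity, $M^{\perp}$ is invariant as well. Hence the orthogonal projection $P_M$ commutes with every $\pi(g)$, so $P_M\in\mathcal{C}(\pi)$. A projection that is a scalar equals $0$ or $I$, so $M=0$ or $M=\mathcal{H}_\pi$, and $\pi$ is irreducible.

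\emph{Irreducible implies scalars.} Assume $\pi$ is irreducible and take $T\in\mathcal{C}(\pi)$. Since $\mathcal{C}(\pi)$ is $*$-closed, I write $T=\frac{T+T^{*}}{2}+i\,\frac{T-T^{*}}{2i}$. This reduces the claim to self-adjoint $T\in\mathcal{C}(\pi)$. By the spectral theorem, every spectral projection $E(B)$ of $T$, for $B$ a Borel set, is a strong limit of polynomials in $T$. Each such polynomial commutes with $\pi(g)$, so every $E(B)$ lies in $\mathcal{C}(\pi)$. The range of $E(B)$ is then a closed invariant subspace, and irreducibility forces $E(B)\in\{0,I\}$ for every $B$.

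Now I show the spectrum of $T$ is a single point. If $\sigma(T)$ contained two distinct points, I could split $\mathbb{R}$ at a value $c$ between them. Then $E((-\infty,c])$ and $E((c,\infty))$ are both nonzero, because the spectral measure of an open set meeting the spectrum is nonzero. This contradicts $E(B)\in\{0,I\}$. Hence $\sigma(T)=\{\lambda\}$, and for a self-adjoint operator this gives $T=\lambda I$.

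The main obstacle is justifying that the spectral projections of $T$ commute with $\pi$. I handle this with the standard fact that any bounded operator commuting with a self-adjoint $T$ commutes with every bounded Borel function of $T$ (Fuglede or Borel functional calculus), applied to $\pi(g)$. In the finite-dimensional modules used later in the paper, the argument simplifies: one takes an eigenvalue $\lambda$ of $T$, notes that $\ker(T-\lambda I)$ is a nonzero invariant subspace, and concludes $T=\lambda I$ directly.
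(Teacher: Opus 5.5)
The paper does not prove this statement. Schur's Lemma is quoted as a standard fact and is then applied only to the finite-dimensional irreducible modules $P^m(\mathbb{C}^n)$ and $P^{s_1}(\mathbb{C}^{b_1})\otimes\dots\otimes P^{s_k}(\mathbb{C}^{b_k})$, so there is no argument in the paper to compare against.

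Your proof is the standard textbook one, and it is correct:
\begin{itemize}
\item $*$-closedness of $\mathcal{C}(\pi)$ reduces the hard direction to self-adjoint $T$.
\item The spectral projections of $T$ commute with each $\pi(g)$, because $\pi(g)$ commutes with $T$.
\item Irreducibility then forces every $E(B)\in\{0,I\}$.
\item A self-adjoint operator whose spectral measure takes only the values $0$ and $I$ has one-point spectrum, and is therefore scalar.
\end{itemize}
Your closing finite-dimensional remark is in fact all the paper ever needs: the eigenspace $\ker(T-\lambda I)$ is a nonzero invariant subspace, so $T=\lambda I$.

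Only cosmetic points remain.
\begin{itemize}
\item The direction you call ``forward'' is actually the ``if'' direction of the stated equivalence.
\item The set $(-\infty,c]$ is not open. Argue instead that $E((-\infty,c])\geq E((-\infty,c))\neq 0$, since $(-\infty,c)$ is open and meets $\sigma(T)$.
\item Both directions tacitly use $\mathcal{H}_\pi\neq 0$. The hard direction needs nonempty spectrum. For the other direction, when $\mathcal{H}_\pi=0$ one has $\mathcal{C}(\pi)=\{0\}=\mathbb{C}I$, yet the zero representation is not called irreducible. This is the usual convention built into the word ``irreducible'', but it is worth saying.
\end{itemize}
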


The isotypic decomposition corresponding to the representation of $\U(n)$ on $\mathcal{A}_h(\mathbb{C}^n)$, Proposition \ref{Isotypic decomposiyion under U(n)}, is given by \[\mathcal{A}_h(\mathbb{C}^n)=\bigoplus_{m=0}^{\infty} P^m( \mathbb{C}^n ) \]
This means that if $\Psi\in \End_{\U(n)}(\mathcal{A}_h(\mathbb{C}^n))$, then we have \[\Psi=\bigoplus_{m=0}^\infty \Psi|_{P^m( \mathbb{C}^n )}\]
Hence, by Schur's Lemma, there exists $\lambda_m\in\mathbb{C}$ such that \[\Psi|_{P^m( \mathbb{C}^n )}=\lambda_m I_{P^m( \mathbb{C}^n )}\] where $I_{P^m( \mathbb{C}^n )}$ is the identity operator on $P^m( \mathbb{C}^n )$. 

As a consequence of the above discussion, we have the following result.

\begin{proposition}
   For every $\Psi\in \End_{\U(n)}(\mathcal{A}_h(\mathbb{C}^n))$, there exists $\lambda_m\in\mathbb{C}$ such that \[\Psi=\bigoplus_{m\in\mathbb{N}_0} \lambda_m I_{P^m( \mathbb{C}^n )}\] Moreover, the map defined by
    \begin{align*}
        \End_{\U(n)}(\mathcal{A}_h(\mathbb{C}^n)) \rightarrow \ell_{\infty}(\mathbb{N}_0)\\ \Psi\mapsto (\lambda_m)_{m\in\mathbb{N}}
    \end{align*} is an isomorphism of $C^*$-algebras.
\end{proposition}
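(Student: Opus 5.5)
First I will establish the block-diagonal form. Fix $\Psi\in\End_{\U(n)}(\mathcal{A}_h(\mathbb{C}^n))$ and $m\in\mathbb{N}_0$, and let $Q_m$ denote the orthogonal projection onto $P^m(\mathbb{C}^n)$. I will show that $\Psi$ maps $P^m(\mathbb{C}^n)$ into itself. The argument mirrors the orthogonality computation in the proof of Proposition \ref{Isotypic decomposition of U_m}. The central elements $e^{it}I_n\in\U(n)$ act on $P^m(\mathbb{C}^n)$ by the scalar $e^{-itm}$. Averaging over the circle therefore gives
\[
Q_m=\frac{1}{2\pi}\int_0^{2\pi} e^{itm}\,\pi(e^{it}I_n)\,dt .
\]
This is a strongly convergent integral, and it uses the orthogonal decomposition of Proposition \ref{Isotypic decomposiyion under U(n)}. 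Since $\Psi$ commutes with every $\pi(e^{it}I_n)$, it commutes with $Q_m$. Hence $\Psi$ leaves each $P^m(\mathbb{C}^n)$ invariant, and $\Psi=\bigoplus_m \Psi|_{P^m(\mathbb{C}^n)}$, where each restriction intertwines the irreducible representation of $\U(n)$ on $P^m(\mathbb{C}^n)$.

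Next I will apply Schur's Lemma. It gives $\Psi|_{P^m(\mathbb{C}^n)}=\lambda_m I_{P^m(\mathbb{C}^n)}$ for some $\lambda_m\in\mathbb{C}$. The subspace is finite dimensional, so no subtlety arises here. Moreover $|\lambda_m|\le\norm{\Psi}$, and orthogonality of the summands gives $\norm{\Psi}=\sup_m|\lambda_m|$. Thus the map $\Phi:\Psi\mapsto(\lambda_m)_m$ lands in $\ell_\infty(\mathbb{N}_0)$ and is isometric, hence injective.

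Then I will verify the algebraic properties of $\Phi$. For two operators in the commutant, the restrictions to each $P^m(\mathbb{C}^n)$ multiply and add as scalars, so $\Phi$ is linear and multiplicative. For the adjoint, $\Psi^*$ also intertwines $\pi$ because $\pi$ is unitary, and $\Psi^*|_{P^m}=\overline{\lambda_m}I$. So $\Phi$ is a $*$-homomorphism.

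For surjectivity, given any bounded sequence $(\lambda_m)$, I will define $\Psi=\sum_m\lambda_m Q_m$. This sum converges strongly to a bounded operator of norm $\sup|\lambda_m|$, because the $P^m(\mathbb{C}^n)$ are mutually orthogonal and span a dense subspace, so their orthogonal sum is the whole space. The operator $\Psi$ commutes with each $\pi(A)$, since every $P^m(\mathbb{C}^n)$ is $\U(n)$-invariant and therefore $Q_m$ commutes with $\pi(A)$. This shows $\Phi$ is bijective, hence a $C^*$-isomorphism.

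The only step needing care is the first one: proving that the commutant respects the isotypic decomposition. I will handle it with the circle-averaging formula for $Q_m$ rather than appealing to general isotypic theory. The remaining steps are routine.
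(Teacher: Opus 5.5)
Your proof is correct and follows the same route as the paper: use the orthogonal decomposition $\mathcal{A}_h(\mathbb{C}^n)=\bigoplus_m P^m(\mathbb{C}^n)$, note that $\Psi$ respects it, and apply Schur's Lemma on each finite-dimensional irreducible summand. The paper only asserts that $\Psi$ preserves each $P^m(\mathbb{C}^n)$ and that $\Psi\mapsto(\lambda_m)_m$ is a $C^*$-isomorphism. Your circle-averaging formula for $Q_m$ over the central elements $e^{it}I_n$, together with your checks of isometry, the $*$-property and surjectivity, supplies the details the paper leaves implicit.
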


The following well-known lemma presents a characterization of $\U(n)$-invariant symbols.

\begin{lemma}
    A symbol $\varphi$ is $U(n)-$ invariant if and only if there exists a function $\textbf{a}_{\varphi}:\mathbb{R}^{+}\rightarrow \mathbb{C}$ such that $\varphi(z)=\textbf{a}_{\varphi}(|z|)$ for almost all $z\in\mathbb{C}^n$.
\end{lemma}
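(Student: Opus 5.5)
The easy direction comes first. If $\varphi(z)=\textbf{a}_{\varphi}(|z|)$ for almost all $z$, then for every $A\in\U(n)$ we have $|A^{-1}z|=|z|$. Hence $A.\varphi(z)=\textbf{a}_{\varphi}(|A^{-1}z|)=\textbf{a}_{\varphi}(|z|)=\varphi(z)$ almost everywhere, so $\varphi$ is $\U(n)$-invariant as an element of $L^\infty(\mathbb{C}^n)$. The only point to note is that $A$ preserves Lebesgue measure, so the exceptional null set is mapped to a null set.

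For the converse I will use polar coordinates, identifying $\mathbb{C}^n\setminus\{0\}$ with $(0,\infty)\times S^{2n-1}$ via $z=r\zeta$. Under this map Lebesgue measure becomes $r^{2n-1}dr\,d\sigma(\zeta)$, where $\sigma$ is the normalized surface measure on the sphere. The key fact is that $\U(n)$ acts transitively on $S^{2n-1}$, and $\sigma$ is its unique $\U(n)$-invariant probability measure. I define the candidate radial profile by averaging:
\[\textbf{a}_{\varphi}(r):=\int_{S^{2n-1}}\varphi(r\zeta)\,d\sigma(\zeta),\]
which is defined for almost every $r$ by Fubini. Equivalently, $\textbf{a}_{\varphi}(|z|)=\int_{\U(n)}\varphi(A^{-1}z)\,dA$, where $dA$ is normalized Haar measure on the compact group $\U(n)$. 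This holds because $A\mapsto A^{-1}\zeta$ pushes Haar measure forward to $\sigma$.

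The main step, and the only real obstacle, is that invariance holds only almost everywhere and separately for each $A$; the null set may depend on $A$, so one cannot simply say $\varphi$ is constant on spheres. I will handle this with Fubini on $\U(n)\times\mathbb{C}^n$. The function $F(A,z)=\varphi(A^{-1}z)-\varphi(z)$ is jointly measurable, and for each fixed $A$ it vanishes for almost every $z$. By Fubini, for almost every $z$ it then vanishes for almost every $A$. Integrating over $A$ at such a $z$ gives
\[\varphi(z)=\int_{\U(n)}\varphi(A^{-1}z)\,dA=\textbf{a}_{\varphi}(|z|).\]
This yields $\varphi(z)=\textbf{a}_{\varphi}(|z|)$ almost everywhere, with $\textbf{a}_{\varphi}$ measurable and bounded by $\norm{\varphi}_\infty$. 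On the null set of radii where the average is undefined, $\textbf{a}_{\varphi}$ can be set to $0$.
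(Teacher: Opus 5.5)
The paper gives no proof of this lemma; it is quoted as ``well-known,'' so your argument supplies one rather than competing with one in the text. Your proof is correct. It also handles the one genuinely delicate point that a one-line proof would hide: invariance of an $L^\infty$ class holds off a null set that may depend on $A$, so you cannot directly conclude that $\varphi$ is constant on spheres. Your Tonelli argument on $\U(n)\times\mathbb{C}^n$, giving $\varphi(z)=\int_{\U(n)}\varphi(A^{-1}z)\,dA$ for almost every $z$, is the standard way to deal with this.

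Two small points to tighten. First, composing a merely Lebesgue measurable $\varphi$ with the continuous map $(A,z)\mapsto A^{-1}z$ need not give a measurable function, so ``jointly measurable'' deserves a sentence. Either replace $\varphi$ by a Borel representative, or observe that $(A,z)\mapsto (A,A^{-1}z)$ is a homeomorphism preserving Haar$\times$Lebesgue measure, hence pulls null sets back to null sets. The Borel replacement is harmless, since $\varphi$ is only an $L^\infty$ class. Almost-everywhere invariance for each fixed $A$ passes to the representative because each $A$ maps Lebesgue null sets to null sets. Second, with $\sigma$ normalized, Lebesgue measure in polar coordinates is $\frac{2\pi^n}{(n-1)!}\,r^{2n-1}dr\,d\sigma(\zeta)$ rather than $r^{2n-1}dr\,d\sigma(\zeta)$; the constant plays no role in your argument.

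You could also bypass the sphere average and the pushforward identity. For Borel $\varphi$, the function $\psi(z):=\int_{\U(n)}\varphi(A^{-1}z)\,dA$ is defined for every $z$. It is invariant pointwise by invariance of Haar measure. By transitivity on spheres, it equals $\mathbf{a}(|z|)$ with $\mathbf{a}(r):=\psi(re_1)$. Your Fubini step then shows $\varphi=\psi$ almost everywhere.
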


The integration of monomials can be done nicely using the following classic result \cite{folland2001integrate}. We will need it to provide an elegant description of the eigenvalues $\lambda_{\varphi}$.
\begin{lemma}
   Let $\sigma$ denote the $(n-1)$-dimensional surface measure on $\mathbb{S}^{n-1}$, then the formula for integration in polar coordinates is \[ \int_{\mathbb{R}^n} f(x) dx=\int_{\mathbb{S}^n}\int_0^{\infty} f(\rho\,\omega) \rho^{n-1}d\rho \, d\sigma(\omega)\]
\end{lemma}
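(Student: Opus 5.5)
This is the standard polar-coordinates formula, cited from Folland, so the plan is to recall how it is proved rather than derive anything new. (The domain of the outer integral should be $\mathbb{S}^{n-1}$; the $\mathbb{S}^n$ in the display is a typo.)

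The idea is to view Lebesgue measure on $\mathbb{R}^n\setminus\{0\}$ through the diffeomorphism $\Phi:(0,\infty)\times\mathbb{S}^{n-1}\to\mathbb{R}^n\setminus\{0\}$, $\Phi(\rho,\omega)=\rho\omega$. The goal is to show that the pushforward of Lebesgue measure under $\Phi^{-1}$ equals the product measure $\rho^{n-1}d\rho\otimes\sigma$. The origin is a null set, so it can be ignored.

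To build $\sigma$, define it for Borel $E\subseteq\mathbb{S}^{n-1}$ by
\[\sigma(E)=n\,\bigl|\{\rho\omega:0<\rho\le 1,\ \omega\in E\}\bigr|.\]
This is a Borel measure on the sphere, and it agrees with the $(n-1)$-dimensional surface measure. The agreement can be checked on coordinate patches, where the Jacobian of $\Phi$ is $\rho^{n-1}$ times the surface area element.

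Next, take a set of the form $\Phi((a,b]\times E)$. Dilation scales Lebesgue measure by $\rho^n$, so its measure is
\[\frac{b^n-a^n}{n}\,\sigma(E)=\sigma(E)\int_a^b\rho^{n-1}\,d\rho.\]
Such product rectangles form a $\pi$-system that generates the Borel $\sigma$-algebra of $(0,\infty)\times\mathbb{S}^{n-1}$. Both measures are $\sigma$-finite and agree on this $\pi$-system, so by the uniqueness (Dynkin $\pi$–$\lambda$) theorem they agree on all Borel sets.

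Finally, extend to functions. The identity holds for indicator functions of Borel sets, hence for simple functions. Monotone convergence then gives it for nonnegative measurable $f$, and splitting into positive and negative parts gives it for integrable $f$. Tonelli/Fubini justifies the iterated order of integration.

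The main obstacle is the identification of $\sigma$ with the geometric surface measure, which needs the local Jacobian computation. If $\sigma$ is simply defined by the dilation formula above, as Folland does, this step disappears and the rest is routine measure theory.
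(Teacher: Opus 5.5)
Your proposal is correct and follows the standard argument (Folland, \emph{Real Analysis}, Theorem~2.49): define $\sigma$ by dilation, match the two measures on the $\pi$-system of sets $(a,b]\times E$, and extend to functions via Tonelli/Fubini. The paper itself gives no proof and only cites Folland for this classical formula. You are also right that the outer integral should be over $\mathbb{S}^{n-1}$ (and over $\mathbb{S}^{2n-1}$ in the paper's later use on $\mathbb{C}^n\cong\mathbb{R}^{2n}$), with $f$ nonnegative measurable or integrable.
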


For the diagonalization of $T_\varphi$, we first consider symbols $\varphi$ that are invariant under the action of the group $\U(n)$. By Corollary \ref{Toeplitz operator intertwine with the representation}, every Toeplitz operator $T_\varphi$ on $\mathcal{A}_h(\mathbb{C}^n)$ whose symbol is $\U(n)$-invariant intertwines with representation $\pi$. This gives us the following result.

\begin{theorem}\label{eigenvalues radial}
    Let $\varphi\in L^{\infty}(\mathbb{C}^n)$ be $\U(n)$-invariant, and write $\varphi(z)= \mathbf{a}_\varphi(|z|)$ for almost every $z\in\mathbb{C}^n$. Then, for every $m\in\mathbb{N}_0$, \[T_\varphi|_{P^m(\mathbb{C}^n)}=\lambda_{\varphi,m}\I\]
    
    where the eigenvalues $\lambda_{\varphi,m}$ can be written as 

    \[\lambda_{\varphi,m}=\dfrac{\displaystyle\int_0^\infty \mathbf{a}_\varphi(\rho)\,\rho^{2m+2n-1}e^{-2h(\rho)}\, d\rho}{\displaystyle\int_0^\infty\, \rho^{2m+2n-1}e^{-2h(\rho)}\,d\rho}.\]

 \begin{proof}
Since $\varphi$ is $U(n)$-invariant, the Toeplitz operator
$T_\varphi$ intertwines with the representation $\pi$. By the isotypic decomposition of $\mathcal{A}_h(\mathbb{C}^n)$ and
Schur's lemma, for every $m\in\mathbb{N}_0$ there exists
$\lambda_{\varphi,m}\in\mathbb{C}$ such that
\[
\left.T_\varphi\right|_{P^m(\mathbb{C}^n)}
=
\lambda_{\varphi,m}I.
\]
 
 Let $r\in\mathbb{N}_0^n$ be such that $|r|=m$. Since $z^r\in P^m(\mathbb{C}^n)$, we have

 \[\innerpro{\lambda_{\varphi,m}z^r}{z^r}_h=\innerpro{T_{\varphi}z^r}{z^r}_h=\innerpro{\varphi z^r}{z^r}_h=\int_{\mathbb{C}^n} \varphi(z) |z|^{2r} e^{-2h(z)}dz\]
 Since $\varphi$ is $U(n)$-invariant. Then $\varphi(z)=\mathbf{a}_\varphi(|z|)$ for almost all $z\in\mathbb{C}^n$. Write $z=\rho\, \omega$ where $\rho>0$ and $\omega\in\mathbb{S}^n$.
 Therefore,
 \begin{align*}
     \lambda_{\varphi,m}&=\dfrac{\displaystyle\int_{\mathbb{C}^n} \varphi(z) |z|^{2r} e^{-2h(z)}dz}{\displaystyle\int_{\mathbb{C}^n} |z|^{2r} e^{-2h(z)}dz}\\
     &= \dfrac{\displaystyle\int_{\mathbb{S}^n}\int_0^\infty \mathbf{a}_\varphi(\rho) \rho^{2|r|} \,|\omega^r|^2 e^{-2h(\rho)} \rho^{2n-1}\, d\rho \, d\omega}{\displaystyle\int_{\mathbb{S}^n}\int_0^\infty \rho^{2|r|}| \,\omega^r|^2\,\rho^{2n-1}\,e^{-2h(\rho)}\,d\rho \, d\omega}\\
     &= \dfrac{\displaystyle\int_0^\infty \mathbf{a}_\varphi(\rho) \rho^{2m+2n-1}\, e^{-2h(\rho)} \, d\rho}{\displaystyle\int_0^\infty \rho^{2m+2n-1}\,e^{-2h(\rho)}\,d\rho}
 \end{align*}
     
 \end{proof}
\end{theorem}

Let $\mathbf{m}:=(b_1,...,b_k)\in \mathbb{N}_0^k$ be a partition of an integer $n=b_1+...+b_k$. We will consider the case where the symbol $\varphi$ is invariant under the group $\mathbf{U_m} :=U(b_1)\times...\times U(b_k)$. The isotypic decomposition of $\mathcal{A}_h(\mathbb{C}^n)$ under the action of $\mathbf{U_m}$, presented in Proposition \ref{Isotypic decomposition of U_m}, is given by \[ \mathcal{A}_h(\mathbb{C}^n)=\bigoplus_{s_1,...,s_k=0}^{\infty} P^{s_1}(\mathbb{C}^{b_1})\otimes...\otimes P^{s_k}(\mathbb{C}^{b_k}) \] 
Again, Schur's Lemma implies that if $\Psi\in \End_{\mathbf{U_m}}(\mathcal{A}_h(\mathbb{C}^n))$, then we have \[\Psi=\bigoplus_{s_1,...,s_k=0}^\infty \lambda_{(s_1,...,s_k)} \I|_{P^{s_1}(\mathbb{C}^{b_1})\otimes...\otimes P^{s_k}(\mathbb{C}^{b_k})}\]
where $\lambda_{(s_1,...,s_k)}\in\mathbb{C}$.

The following result is an immediate consequence of the previous discussion.

\begin{proposition}
    Let $\mathbf{m}=(b_1,...,b_k)$ be a partition of an integer $n=b_1+...+b_k$. For every $\Psi\in \End_{\mathbf{U_m}}(\mathcal{A}_h(\mathbb{C}^n))$, there exists $\lambda_{(s_1,...,s_k)}\in\mathbb{C}$ such that \[\Psi=\bigoplus_{s_1,...,s_k=0}^\infty \lambda_{(s_1,...,s_k)} \I|_{P^{s_1}(\mathbb{C}^{b_1})\otimes...\otimes P^{s_k}(\mathbb{C}^{b_k})}\] Moreover, the map defined by
    \begin{align*}
        \End_{\mathbf{U_m}}(\mathcal{A}_h(\mathbb{C}^n)) \rightarrow \ell_{\infty}(\mathbb{N}_0^k)\\ \Psi\mapsto (\lambda_s)_{s\in\mathbb{N}_0^k}
    \end{align*} is an isomorphism of $C^*$-algebras.
\end{proposition}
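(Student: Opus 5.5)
Write $V_{\mathbf{s}}:=P^{s_1}(\mathbb{C}^{b_1})\otimes\cdots\otimes P^{s_k}(\mathbb{C}^{b_k})$ and let $Q_{\mathbf{s}}$ be the orthogonal projection of $\mathcal{A}_h(\mathbb{C}^n)$ onto $V_{\mathbf{s}}$. By Proposition \ref{Isotypic decomposition of U_m} the $V_{\mathbf{s}}$ are mutually orthogonal, finite-dimensional, and their sum is dense, so $\sum_{\mathbf{s}} Q_{\mathbf{s}}=\I$ strongly. The plan has three steps. First, show that each $\Psi\in\End_{\mathbf{U_m}}(\mathcal{A}_h(\mathbb{C}^n))$ maps $V_{\mathbf{s}}$ into itself. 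Second, apply Schur's lemma to get the scalars $\lambda_{\mathbf{s}}$. Third, check that $\Psi\mapsto(\lambda_{\mathbf{s}})$ is an isometric $*$-isomorphism onto $\ell_\infty(\mathbb{N}_0^k)$.

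For invariance, fix $\mathbf{s}\neq\mathbf{r}$ and consider $Q_{\mathbf{r}}\Psi|_{V_{\mathbf{s}}}:V_{\mathbf{s}}\to V_{\mathbf{r}}$. It intertwines $\pi_{\mathbf{m,s}}$ with $\pi_{\mathbf{m,r}}$, because $Q_{\mathbf{r}}$ commutes with $\pi_{\mathbf{m}}$ (its range and the orthogonal complement are both invariant). These two representations are irreducible and inequivalent, since their highest weights $\gamma_{\mathbf{s}}\neq\gamma_{\mathbf{r}}$ differ. By Schur's lemma the map is therefore zero. A cheaper route is the torus element $A=(I,\dots,e^{it}I_{b_j},\dots,I)$ already used in the proof of Proposition \ref{Isotypic decomposition of U_m}: it acts on $V_{\mathbf{s}}$ by $e^{-its_j}$ and on $V_{\mathbf{r}}$ by $e^{-itr_j}$, so any intertwiner between them must vanish. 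Either way $\Psi V_{\mathbf{s}}\subseteq V_{\mathbf{s}}$, and $\Psi|_{V_{\mathbf{s}}}$ intertwines the irreducible $\pi_{\mathbf{m,s}}$. Schur's lemma then gives $\Psi|_{V_{\mathbf{s}}}=\lambda_{\mathbf{s}}\I$. Since $\Psi$ is bounded and the $V_{\mathbf{s}}$ span a dense subspace, $\Psi=\bigoplus_{\mathbf{s}}\lambda_{\mathbf{s}}\I|_{V_{\mathbf{s}}}$ holds strongly.

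For the isomorphism, the map is clearly linear and multiplicative, since $\Psi_1\Psi_2$ acts on $V_{\mathbf{s}}$ by $\lambda^{(1)}_{\mathbf{s}}\lambda^{(2)}_{\mathbf{s}}$. It preserves adjoints because $\Psi^*$ restricted to $V_{\mathbf{s}}$ equals $\overline{\lambda_{\mathbf{s}}}\I$; this follows by pairing against vectors of $V_{\mathbf{s}}$, using that $\Psi^*$ also leaves $V_{\mathbf{s}}$ invariant. For the norm, an orthogonal direct sum of scalar operators has $\norm{\Psi}=\sup_{\mathbf{s}}|\lambda_{\mathbf{s}}|$, because each $V_{\mathbf{s}}\neq\{0\}$. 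So the map is isometric, in particular injective, and lands in $\ell_\infty(\mathbb{N}_0^k)$. For surjectivity, take any bounded $(\lambda_{\mathbf{s}})$ and define $\Psi:=\sum_{\mathbf{s}}\lambda_{\mathbf{s}}Q_{\mathbf{s}}$. This is bounded with norm $\sup|\lambda_{\mathbf{s}}|$. It commutes with $\pi_{\mathbf{m}}(A)$ for every $A$, because each $Q_{\mathbf{s}}$ does.

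The main point needing care is the invariance step. It is where multiplicity-freeness is actually used: without it, intertwiners could mix equivalent summands. I would present the torus-character argument since it is fully elementary. The rest is routine bookkeeping with orthogonal projections and the fact that each $V_{\mathbf{s}}$ is nonzero.
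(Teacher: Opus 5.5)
Your proof is correct and takes essentially the same route as the paper: the isotypic decomposition of Proposition \ref{Isotypic decomposition of U_m} combined with Schur's lemma. You also supply steps the paper leaves implicit. These are that every intertwiner preserves each summand $V_{\mathbf{s}}$ (your torus-character argument), and that $\Psi\mapsto(\lambda_{\mathbf{s}})$ is an isometric, surjective $*$-homomorphism, which the paper only asserts.
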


We recall the following lemma.

\begin{lemma}[\cite{dewage2022toeplitz}]
    A symbol $\varphi\in L^{\infty}(\mathbb{C}^n)$ is invariant under $\mathbf{\U_m}$ if and only if there exists a function $\mathbf{a}_{\varphi}:\mathbb{R}_{+}^k\rightarrow \mathbb{C}$ such that $\varphi(z)=\mathbf{a}_{\varphi}(|z_{(1)}|,...,|z_{(k)}|)$.
\end{lemma}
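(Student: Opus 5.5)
The plan is to prove the two implications separately. The backward one is immediate and the forward one is the substance.

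For the backward direction, suppose $\varphi(z)=\mathbf{a}_{\varphi}(|z_{(1)}|,\dots,|z_{(k)}|)$ almost everywhere. For $(A_1,\dots,A_k)\in\mathbf{U_m}$ we have $|A_i^{-1}z_{(i)}|=|z_{(i)}|$, since each $A_i$ is unitary on $\mathbb{C}^{b_i}$. Hence $(A_1,\dots,A_k).\varphi=\varphi$ almost everywhere. The null set causes no difficulty, because each unitary map preserves Lebesgue measure.

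For the forward direction, the plan is to reduce to the $\U(n)$ case, i.e.\ the lemma characterizing $\U(n)$-invariant symbols as radial ones, applied blockwise. I will use polar coordinates on each block: the map
\[
\Phi:\ (\rho_1,\omega_1,\dots,\rho_k,\omega_k)\mapsto (\rho_1\omega_1,\dots,\rho_k\omega_k),
\]
from $(0,\infty)^k\times \mathbb{S}^{2b_1-1}\times\cdots\times\mathbb{S}^{2b_k-1}$ to $\mathbb{C}^n$ minus a null set. By the polar-coordinate formula, applied in each factor, Lebesgue measure pulls back under $\Phi$ to
\[
\prod_i \rho_i^{2b_i-1}\,d\rho_i\,d\sigma_i(\omega_i).
\]
In these coordinates $\mathbf{U_m}$ acts only on the sphere factors, and transitively, since $\U(b_i)$ is transitive on $\mathbb{S}^{2b_i-1}$.

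Next I define the candidate function by averaging over the group:
\[
\mathbf{a}_\varphi(\rho_1,\dots,\rho_k):=\int_{\mathbf{U_m}} \varphi(A_1\rho_1 e_1,\dots,A_k\rho_k e_1)\,dA,
\]
where $dA$ is normalized Haar measure and $e_1$ is the first basis vector in each block. The key step is to show that $\varphi(z)=\mathbf{a}_\varphi(|z_{(1)}|,\dots,|z_{(k)}|)$ almost everywhere.

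To get this, I first note that invariance almost everywhere, holding for each group element separately, upgrades to joint almost-everywhere invariance. By Fubini on $\mathbf{U_m}\times\mathbb{C}^n$, the set of pairs $(A,z)$ with $\varphi(A^{-1}z)\neq\varphi(z)$ is null, since each $A$-slice is null. Hence, for almost every $z$, we have $\varphi(A^{-1}z)=\varphi(z)$ for almost every $A$. Averaging over $A$ then gives
\[
\varphi(z)=\int_{\mathbf{U_m}}\varphi(Az)\,dA \quad\text{for almost every } z.
\]

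Finally, write $z_{(i)}=|z_{(i)}|B_ie_1$ with $B_i\in\U(b_i)$. Left invariance of Haar measure shows that the average $\int\varphi(Az)\,dA$ equals $\mathbf{a}_\varphi(|z_{(1)}|,\dots,|z_{(k)}|)$.

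The main obstacle is the measure-theoretic care in this last step. Measurability of the pair map $(A,z)\mapsto\varphi(A^{-1}z)$ is needed for Fubini. It follows because $(A,z)\mapsto A^{-1}z$ is a smooth submersion, so preimages of null sets are null. The remaining steps are routine once that is in place.
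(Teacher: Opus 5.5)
The paper gives no proof of this lemma; it is quoted from \cite{dewage2022toeplitz}. The standard justification is orbital. Since $\U(b_j)$ acts transitively on each sphere in $\mathbb{C}^{b_j}$, the $\mathbf{U_m}$-orbits are exactly the sets $\{z : |z_{(j)}|=\rho_j,\ j=1,\dots,k\}$. So a function invariant at every point is constant on the fibres of $z\mapsto(|z_{(1)}|,\dots,|z_{(k)}|)$.

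Your proposal is correct, and it is a more careful route than the orbital one. For $\varphi\in L^\infty(\mathbb{C}^n)$, invariance only gives $\varphi\circ A^{-1}=\varphi$ off a null set that may depend on $A$. Each orbit is itself Lebesgue-null, so the orbit argument cannot be applied verbatim. Your Haar averaging, combined with Fubini on $\mathbf{U_m}\times\mathbb{C}^n$, is what turns the family of $A$-dependent null sets into the single statement $\varphi(z)=\int_{\mathbf{U_m}}\varphi(Az)\,dA$ for almost every $z$. It also produces $\mathbf{a}_\varphi$ explicitly. The backward direction is handled correctly, since unitary maps preserve Lebesgue measure.

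There are three minor points of precision, none affecting validity.
\begin{itemize}
\item The substitution $A\mapsto AB$ in your last step uses right invariance of Haar measure, not left invariance.
\item The identity $\int\varphi(A^{-1}z)\,dA=\int\varphi(Az)\,dA$ uses invariance of Haar measure under inversion.
\end{itemize}
Both of these hold because $\mathbf{U_m}$ is compact and hence unimodular.
\begin{itemize}
\item Your definition of $\mathbf{a}_\varphi(\rho)$ evaluates $\varphi$ on a null orbit, so fix a bounded Borel representative of $\varphi$ at the outset. Then $\mathbf{a}_\varphi(\rho)$ is defined for every $\rho$, and the identity $\int\varphi(Az)\,dA=\mathbf{a}_\varphi(|z_{(1)}|,\dots,|z_{(k)}|)$ holds for every $z$. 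The only almost-everywhere step left is the Fubini step, which you have handled.
\end{itemize}
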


The following theorem describes the eigenvalues of $T_\varphi$ with a $k$-quasi-Radial symbol $\varphi$. The proof is similar to the proof of Theorem 2.12 in \cite{dewage2022toeplitz}.

\begin{theorem}
     Let $\mathbf{m}=(b_1,...,b_k)$ be a partition of an integer $n=b_1+...+b_k$, and assume that $\varphi\in L^{\infty}(\mathbb{C}^n)$ is invariant under $\mathbf{U_m}$. Then for every $\mathbf{s}=(s_1,\dots,s_k)\in \mathbb{N}_0^k$  \[T_\varphi|_{P^{s_1}(\mathbb{C}^{b_1})\otimes...\otimes P^{s_k}(\mathbb{C}^{b_k})}=\lambda_{\varphi,\mathbf{s}}\I.\] The eigenvalues $\lambda_{\varphi,\mathbf{s}}$ satisfy \[\lambda_{\varphi,\mathbf{s}}=\dfrac{\displaystyle\int_{\mathbb{R}_0^k}\, \mathbf{a}_{\varphi}(r_1,\dots,r_k) \prod_{j=1}^k r_j^{2s_j+2b_j-1}  e^{-2h\left(|\mathbf{r}|\right)}\, d\mathbf{r}}{\displaystyle\int_{\mathbb{R}_0^k}\,\, \prod_{j=1}^k r_j^{2s_j+2b_j-1}  e^{-2h\left(|\mathbf{r}|\right)}\, d\mathbf{r}}. \]
    where $d\mathbf{r}=dr_1\dots dr_k$ and $|\mathbf{r}|=\sqrt{r_1^2+\dots + r_k^2}$.
     \begin{proof}
         Let $\boldsymbol{\alpha}=(\alpha_1,...,\alpha_k)$ be such that $|\alpha_j|=s_j$. Let $p^{\alpha_j}(z)=z^{\alpha_j}$ and consider the monomial $q_{{\boldsymbol{\alpha}}}=p_{\alpha_1}(z_{(1)})...p_{\alpha_k}(z_{(k)})$ which belongs to $P^{s_1}(\mathbb{C}^{b_1})\otimes...\otimes P^{s_k}(\mathbb{C}^{b_k})$. Assume $|z_{(j)}|=r_j$. First, note that

         \begin{align*}
            \int_{\mathbb{C}^{b_j}} |p_{\alpha_j}(z_{(j)})|^2e^{-2h(|\mathbf{r}|)} \, d z_{(j)}=& \int_{\mathbb{S}^{2b_j-1}}\int_0^{\infty} |(r_j\,\omega_{(j)})^{\alpha_j}|^2 r_j^{2b_j-1} e^{-2h(|\mathbf{r}|)}dr_j \, d\sigma(\omega_{(j)})\\
             =& \int_{\mathbb{S}^{2b_j-1}} |\omega_{(j)}^{\alpha_j}|^2 d\sigma(\omega_{(j)}) \int_0^{\infty} r_j^{2s_j} r_j^{2b_j-1}e^{-2h(|\mathbf{r}|)}dr_j\\
             =& \dfrac{2\pi^{b_j} \alpha_j!}{(b_j-1+|\alpha_j|)!}\int_0^{\infty} r_j^{2b_j+2s_j-1}e^{-2h(|\mathbf{r}|)}dr_j
         \end{align*}

Moreover, 
         \begin{align*}
           &\innerpro{\varphi q_{\alpha}}{q_{\alpha}}_h=\int_{\mathbb{C}^n}  \varphi(z) |q_{\alpha}(z)|^2 e^{-2h(z)}dz \\
           =& \int_{\mathbb{C}^{b_1}}\dots\int_{\mathbb{C}^{b_k}} \, \varphi(z_{(1)},\dots,z_{(k)})|q_{\alpha}(z_{(1)},\dots,z_{(k)})|^2 e^{-2h(|z|)}\, dz_{(1)}\dots dz_{(k)} \\
           =& \int_{\mathbb{S}^{2b_1-1}}\dots\int_{\mathbb{S}^{2b_k-1}} \int_{\mathbb{R}_+^k} \mathbf{a}_{\varphi}(r_1,\dots ,r_k) \prod_{i=1}^k |\omega_j^{\alpha_j}|^2  r_j^{2s_j+2b_j-1}  e^{-2h(|\mathbf{r}|)}\, d\mathbf{r}d\sigma(\boldsymbol{\omega})\\
           =&  \prod_{j=1}^k \dfrac{2\pi^{b_j}\alpha_j!}{(b_j-1+|\alpha_j|)!} \int_{\mathbb{R}_+^k} \mathbf{a}_{\varphi}(r_1,\dots ,r_k) \prod_{j=1}^k r_j^{2s_j+2b_j-1}  e^{-2h(|\mathbf{r}|)}\, d\mathbf{r}
         \end{align*}
         
Since $\innerpro{\lambda_{\varphi,\mathbf{s}}\,q_\alpha}{q_\alpha}_h=\innerpro{\varphi\,q_\alpha}{q_\alpha}_h$. Then we obtain the desired result by dividing by $\innerpro{q_\alpha}{q_\alpha}_h$.

     \end{proof}
\end{theorem}

\section{Density of Toeplitz Operators }
Let $H$ be a compact subgroup of $\U(n)$, and let $\mathcal{T}(\mathcal{A}_h(\mathbb{C}^n))^H$ be the $C^*$-algebra generated by Toeplitz operators with $L^\infty(\mathbb{C}^n)$ symbols that are invariant under $H$. We will show that $\mathcal{T}(\mathcal{A}_h(\mathbb{C}^n))^H$ is dense in $\End_H(\mathcal{A}_h(\mathbb{C}^n))$ under the strong operator topology; this will help us characterize the commutativity of $\mathcal{T}(\mathcal{A}_h(\mathbb{C}^n))^H$ in terms of multiplicity-free representation.

\begin{lemma}[\cite{Englis-Density-of-algebras}]
Let $\Omega$ be a domain in $\mathbb{C}^n$ and $F(x,y)$ be defined on $\Omega\times \bar{\Omega}$ such that $F$ is holomorphic in the first variable and anti-holomorphic in the second variable. If $F(x,\bar{x})=0$ for every $x$ in $\Omega$, then $F$ is identically zero on $\Omega\times \bar{\Omega}$.
\end{lemma}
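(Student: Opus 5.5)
The plan is to expand $F$ in a power series and reduce the statement to a power series identity. Since the claim is local plus the identity theorem, it suffices to work near a point. Fix $x_0\in\Omega$. The map $(x,y)\mapsto F(x,\bar y)$ is holomorphic in both variables jointly on $\Omega\times\Omega$. For holomorphy in each variable separately this follows from Hartogs' theorem; in the intended application $F$ is real-analytic or continuous, and separate holomorphy then suffices via Osgood's lemma. On a small polydisc around $(x_0,\bar x_0)$ we can therefore expand
\[
F(x,y)=\sum_{\alpha,\beta\in\mathbb{N}_0^n} c_{\alpha\beta}\,(x-x_0)^{\alpha}\,\overline{(y-x_0)}^{\,\beta}
\]
(writing the second variable so that $y\mapsto F(x,y)$ is anti-holomorphic, with $y$ ranging near $x_0$). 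The expansion converges absolutely and uniformly on compacta of the polydisc.

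Next, set $y=x$ and $w=x-x_0$. The hypothesis $F(x,\bar x)=0$ gives
\[
\sum_{\alpha,\beta} c_{\alpha\beta}\, w^{\alpha}\bar w^{\beta}=0
\]
for all small $w$. The key step is to show that every $c_{\alpha\beta}$ vanishes. I would argue via the Wirtinger derivatives: applying $\partial^{\alpha}/\partial w^{\alpha}$ and $\partial^{\beta}/\partial \bar w^{\beta}$ termwise, which is justified by absolute convergence, and evaluating at $w=0$ yields $\alpha!\,\beta!\,c_{\alpha\beta}=0$. Alternatively, write $w_j=r_je^{i\theta_j}$ and integrate against $e^{-i(\alpha-\beta)\cdot\theta}$ over the torus. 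This isolates the terms with fixed $\alpha-\beta$, and they are then separated by the radial powers $r^{\alpha+\beta}$.

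Hence $F$ vanishes on a neighborhood of $(x_0,\bar x_0)$ in $\Omega\times\bar\Omega$. Since $\Omega$ is a domain, $\Omega\times\bar\Omega$ is connected, and the function $(x,y)\mapsto F(x,\bar y)$ is holomorphic on the connected set $\Omega\times\Omega$. The identity theorem for holomorphic functions of several variables then gives $F\equiv 0$ everywhere.

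The main obstacle is not the coefficient extraction, which is routine, but the regularity needed to justify the joint power series expansion. This comes either from Hartogs' theorem or from an explicit joint continuity assumption. In the application later in the paper, the functions will be of the form $F(z,w)=\langle T K_w,K_z\rangle$, which are jointly continuous, so I would invoke Osgood's lemma there.
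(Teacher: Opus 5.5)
Your route (expand at a point, isolate the coefficients of $w^\alpha\bar w^\beta$ by Wirtinger derivatives, then apply the identity theorem) is the standard proof behind the cited result of Engli\v{s}; the paper itself gives no proof. As written, however, your argument has a genuine slip at exactly the step that matters. You center the expansion ``around $(x_0,\bar x_0)$'' but write it in powers of $\overline{(y-x_0)}$ with $y$ near $x_0$, and then you ``set $y=x$''. That step uses $F(x,x)=0$, not the stated hypothesis $F(x,\bar x)=0$. Suppose you honestly expand an $F$ that is anti-holomorphic in $y\in\bar\Omega$ around $(x_0,\bar x_0)$, i.e. in powers of $\overline{y-\bar x_0}=\bar y-x_0$, and then substitute $y=\bar x$. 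You get $\sum c_{\alpha\beta}w^\alpha w^\beta$, a holomorphic series in $w$, and from it only $\sum_{\alpha+\beta=\gamma}c_{\alpha\beta}=0$ follows. This cannot be repaired, because the statement read literally is false. Your own observation that $G(x,y)=F(x,\bar y)$ is jointly holomorphic on $\Omega\times\Omega$ turns the hypothesis into $G(x,x)=0$, which is vanishing on a complex submanifold. A counterexample is $F(x,y)=x_1-\bar y_1$: it is holomorphic in $x$, anti-holomorphic in $y$, satisfies $F(x,\bar x)=0$, and is not identically zero.

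What you should have done is flag and fix the statement. One correct form is: $F$ is holomorphic in \emph{both} variables on $\Omega\times\bar\Omega$ and $F(x,\bar x)=0$. This is the version the paper actually uses, since $F(x,y)=\sum u_{ij}f_i(x)\overline{g_j(\bar y)}$ is holomorphic in $y$. An equivalent form is: $F$ is holomorphic in $x$ and anti-holomorphic in $y$ on $\Omega\times\Omega$ and $F(x,x)=0$. In either form the restriction to the relevant set is $\sum c_{\alpha\beta}w^\alpha\bar w^\beta$. Your coefficient extraction, the local vanishing, and the connectedness/identity-theorem step then go through verbatim. The essential point, which your write-up blurs, is that the zero set of the jointly holomorphic function must be the totally real set $\{(x,\bar x)\}$, not the complex diagonal. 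A minor remark: in the paper's application $F$ is a finite sum of products of entire functions, so joint holomorphy is immediate and neither Hartogs nor Osgood is needed. Also, the kernel form $\langle TK_w,K_z\rangle$ you anticipate is not what is used here.
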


The following lemma appeared in the proof of Theorem 2 in \cite{Englis-Density-of-algebras} for Toeplitz operators on the Bergman space; we will show that it still holds for the weighted radial Fock space. In fact, Engli\v{s} \cite{Englis-Density-of-algebras} stated that the argument works for the classical Fock space. We will use this lemma to prove that, if $H$ is a compact subgroup of $\U(n)$, then the space of Toeplitz operators with $H$-invariant symbols is dense in $\End_H(\mathcal{A}_h(\mathbb{C}^n))$ with respect to the strong operator topology.

\begin{lemma}
\label{zero on the diagonal}
    Suppose that $\lbrace f_1,\dots,f_q\rbrace$ and $\lbrace g_1,\dots,g_s\rbrace$ are bases of finite-dimensional subspaces of $\mathcal{A}_h(\mathbb{C}^n)$. If $u\in \mathbb{C}^{q\times  s}$ satisfies \[ \sum_{j=1}^s\sum_{i=1}^q \langle T_{\varphi}f_i,g_j\rangle _h \,u_{ij}=0 \] for every $\varphi\in L^{\infty}(\mathbb{C}^n)$, then $u=0$.

    \begin{proof}  
Suppose we have $u\in\mathbb{C}^{q\times s}$ such that \[ \sum_{j=1}^s\sum_{i=1}^q \langle T_{\varphi}f_i,g_j\rangle_h \, u_{ij}=0 \] for every $\varphi \in L^{\infty}(\mathbb{C}^n)$. Then \[ \int_{\mathbb{C}^n} \varphi(z) e^{-2h(z)}\sum_{j=1}^s\sum_{i=1}^q u_{ij}f_i(z)\overline{g_j(z)}\,dz=0 \] for all $\varphi\in L^{\infty}(\mathbb{C}^n)$. Therefore, we have
\begin{equation}
\label{C1}
  \sum_{j=1}^s\sum_{i=1}^q u_{ij}f_i(z)\overline{g_j(z)}=0 
\end{equation} almost everywhere on $\mathbb{C}^n$. Since the left-hand side of equation \ref{C1} is continuous, then the equation holds everywhere on $\mathbb{C}^n$. Define the function \[ F(x,y)=\sum_{j=1}^s\sum_{i=1}^q u_{ij}f_i(x)\overline{g_j(\overline{y})} .\] Then $F(z,\bar{z})=0$ on $\mathbb{C}^n$. Therefore, the function $F$ is zero on $\mathbb{C}^n\times \mathbb{C}^n$. Since the functions $f_i's$ and $g_i's$ are linearly independent, then $u_{ij}=0$ for every $i,j$. 
\end{proof}

\end{lemma}

\begin{theorem}\label{generalized Englis density theorem}
 Let $H$ be a compact subgroup of $\U(n)$. The space $\mathcal{T}(\mathcal{A}_h(\mathbb{C}^n))^{H}$ is dense in $\End_{H}(\mathcal{A}_{h}(\mathbb{C}^n))$, the space of $H$-intertwining operators on $\mathcal{A}_{h}(\mathbb{C}^n)$, under the strong operator topology.
 \end{theorem}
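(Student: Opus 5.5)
We follow Engliš's density argument in \cite{Englis-Density-of-algebras}, adapted to the group-invariant setting. By the bicommutant theorem, it suffices to show that the commutant of $\mathcal{T}(\mathcal{A}_h(\mathbb{C}^n))^H$ equals the commutant of $\End_H(\mathcal{A}_h(\mathbb{C}^n))$. Since $\End_H = \pi(H)'$ and $\pi(H)''=\End_H'$, this amounts to showing that the commutant of the $H$-invariant Toeplitz operators is contained in the von Neumann algebra generated by $\pi(H)$. Working directly with strong-operator neighbourhoods is cleaner, however, so we plan the following concrete route.

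Fix $\Psi\in\End_H(\mathcal{A}_h(\mathbb{C}^n))$, vectors $x_1,\dots,x_q$, and $\varepsilon>0$. First, since $\Psi$ commutes with $\pi$, it preserves each isotypic component. Because the homogeneous polynomial spaces $P^m(\mathbb{C}^n)$ are $\U(n)$-invariant, hence $H$-invariant and finite dimensional, we may truncate: replace each $x_i$ by its projection onto $V_N=\bigoplus_{m\le N}P^m(\mathbb{C}^n)$. The $V_N$ are $H$-invariant, and $\Psi$ commutes with the orthogonal projection $Q_N$ onto $V_N$ provided $\Psi$ commutes with the $\U(n)$ isotypic projections, which holds since those projections lie in $\pi(\mathbb{T})''$ for the center $\mathbb{T}\cdot I\subset\U(n)$. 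Caution is needed if $H$ does not contain the scalars. In that case we would instead use finite sums of $H$-isotypic pieces: the compactness of $H$ gives a decomposition of $\mathcal{A}_h(\mathbb{C}^n)$ into finite-dimensional $H$-invariant subspaces, each lying inside finitely many $P^m$. This reduces the problem to a finite-dimensional one: it suffices to show that the compressions $Q_N T_\varphi Q_N$, with $\varphi$ ranging over $L^\infty(\mathbb{C}^n)^H$, span all of $\End_H(V_N)$, since $Q_N T_\varphi x = Q_N\Psi x$ is then achievable exactly for $x\in V_N$, and the tails are controlled by norm bounds.

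The key step is the spanning claim, which we prove by duality using Lemma \ref{zero on the diagonal}. Let $W=\{Q_NT_\varphi|_{V_N}:\varphi\in L^\infty(\mathbb{C}^n)^H\}\subseteq \End_H(V_N)$. Suppose a linear functional $\ell$ on $\End(V_N)$, given by a matrix $u$ with respect to a basis, annihilates $W$. We average $u$ over $H$ with Haar measure to obtain an $H$-invariant functional $\tilde u$. Since $\int_H \ell(\pi(g)X\pi(g)^{-1})\,dg$ agrees with $\ell(X)$ on $\End_H(V_N)$, it suffices to show $\tilde u$ vanishes. For an arbitrary $\varphi\in L^\infty(\mathbb{C}^n)$, Proposition \ref{Toeplitz and rep} gives $\pi(g)T_\varphi\pi(g)^{-1}=T_{g.\varphi}$, so $\tilde\ell(T_\varphi)=\ell(T_{\bar\varphi})$, where $\bar\varphi=\int_H g.\varphi\,dg$ is $H$-invariant and bounded. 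Hence $\tilde\ell(T_\varphi)=0$ for every $\varphi\in L^\infty(\mathbb{C}^n)$, and Lemma \ref{zero on the diagonal} forces $\tilde u=0$. Therefore $\ell$ vanishes on $\End_H(V_N)$, which shows $W=\End_H(V_N)$.

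Finally, we assemble the pieces. Given $\Psi$ and $x_i$, choose $N$ with $\|x_i-Q_Nx_i\|$ small, and pick $\varphi$ with $Q_NT_\varphi|_{V_N}=\Psi|_{V_N}$. The main obstacle is that $T_\varphi$ need not map $V_N$ into $V_N$, so $T_\varphi Q_Nx_i$ may differ from $Q_NT_\varphi Q_N x_i$, and $\|T_\varphi\|$ is uncontrolled, so the tail cannot be absorbed. We would try to circumvent this with the $\U(n)$ grading when $H$ commutes with the scalar circle. Since $P^m\perp P^{m'}$, the projections $Q_N$ are themselves in the strong closure of the algebra: for instance, the radial case gives diagonal operators $\lambda_{\varphi,m}$ that can approximate the indicator of $\{m\le N\}$ on finitely many $m$. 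Then $Q_N T_\varphi Q_N$ belongs to the algebra, which suffices for density, since we only claim density of the generated $C^*$-algebra, which contains these products. If the indicator approximation fails, we would instead argue via the bicommutant: we show $\mathcal{T}^H{}'\subseteq\pi(H)''$, using that any $B$ commuting with all of $W$, on every $V_N$, commutes with $\End_H(V_N)$, and then pass to the limit in $N$.
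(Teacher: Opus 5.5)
Your key step is correct, and it orders the argument differently from the paper. Averaging an annihilating functional over $H$ and then applying Lemma~\ref{zero on the diagonal} does show that the compressions $Q_NT_\varphi|_{V_N}$, $\varphi\in L^\infty(\mathbb{C}^n)^H$, exhaust $\End_H(V_N)$. The paper instead first proves that all Toeplitz operators are weakly dense in $\mathcal{B}(\mathcal{A}_h(\mathbb{C}^n))$; the lemma is applied to arbitrary finite families of vectors, so no truncation is involved. It then upgrades to strong density because weak and strong closures of a convex set coincide, and only afterwards averages the symbols over $H$.

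The gap is the step you flag as the main obstacle: it is never closed. There are two problems.
\begin{itemize}
\item For general $H$, an operator $\Psi\in\End_H(\mathcal{A}_h(\mathbb{C}^n))$ need not preserve $V_N$; for $H=\{I\}$ it is an arbitrary bounded operator. Retreating to finite-dimensional $H$-invariant pieces does not help, because $\Psi$ only preserves $H$-isotypic components, which can be infinite-dimensional. (Every subgroup of $\U(n)$ commutes with the scalars; what you need is $Q_N\in\pi(H)''$, which holds e.g.\ when $H$ contains the scalar circle.)
\item Your reason for $Q_N$ lying in the strong closure is that radial eigenvalue sequences agree with the indicator of $\{m\le N\}$ at finitely many $m$. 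This reproduces the same tail problem: nothing controls $\lambda_{\varphi,m}$ at the remaining $m$ except $\|\varphi\|_\infty$, which is unbounded along such a family, so no strong convergence follows.
\end{itemize}
The bicommutant fallback rests on the same missing fact, since it presupposes that an operator commuting with $\mathcal{T}(\mathcal{A}_h(\mathbb{C}^n))^H$ preserves every $V_N$.

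Two ingredients are missing: (a) rectangular compressions, and (b) an actual proof that $Q_N\in\mathcal{T}(\mathcal{A}_h(\mathbb{C}^n))^H$.

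For (a), your averaging argument, run with bases of $V_N$ and $V_M$, shows verbatim that $\{Q_MT_\varphi|_{V_N}:\varphi\in L^\infty(\mathbb{C}^n)^H\}=\mathrm{Hom}_H(V_N,V_M)$.

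For (b), let $D=T_\chi$ with $\chi$ the indicator of $\{|z|\le R\}$. It is radial, hence in the algebra. By Theorem~\ref{eigenvalues radial} it acts on $P^m(\mathbb{C}^n)$ as the scalar $\mu_m([0,R])$, where $\mu_m$ is the probability measure proportional to $\rho^{2m+2n-1}e^{-2h(\rho)}\,d\rho$. Since $\mu_{m+1}$ has density proportional to $\rho^2$ relative to $\mu_m$, these numbers are strictly decreasing in $m$, and they tend to $0$. Hence each is isolated in $\sigma(D)$, so the projection onto $P^m(\mathbb{C}^n)$ is a continuous function of $D$ and lies in the $C^*$-algebra.

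With both in hand, $A=Q_MT_\varphi Q_N$ belongs to $\mathcal{T}(\mathcal{A}_h(\mathbb{C}^n))^H$. Choosing $\varphi$ with $Q_MT_\varphi Q_N=Q_M\Psi Q_N$ gives
\[
\|(A-\Psi)x\|_h\le\|(I-Q_M)\Psi Q_Nx\|_h+\|\Psi\|\,\|(I-Q_N)x\|_h ,
\]
which is small for finitely many $x$ once $N$, and then $M$, are chosen large. No bound on $\|T_\varphi\|$ is needed. This proves density of the $C^*$-algebra, which is what the statement asserts, rather than of the set of $H$-invariant Toeplitz operators itself.

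Completed this way, your route is in fact more robust than the paper's final step. The paper averages an approximating \emph{sequence} and invokes the uniform boundedness principle, although strong density by itself only supplies a net.
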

 \begin{proof}
   First we will show that the space $\mathcal{T}(\mathcal{A}_h(\mathbb{C}^n))=\lbrace T_\varphi: \varphi\in L^\infty(\mathbb{C}^n) \rbrace$ is dense in $\mathcal{B}(\mathcal{A}_h(\mathbb{C}^n))$ in the strong operator topology. Let $\lbrace f_1,...,f_q \rbrace$ and $\lbrace g_1,...,g_r \rbrace $ be two linearly independent families in $\mathcal{A}_h(\mathbb{C}^n)$. Let $T$ be a bounded operator on $\mathcal{A}_{h}(\mathbb{C}^n)$, we will show that there exists $\varphi\in L^\infty(\mathbb{C}^n)$ such that \[ \langle Tf_i,g_j\rangle \,\, =\,\, \langle T_{\varphi}f_i,g_j\rangle  \]
 
 Define the operator \[ R:L^\infty(\mathbb{C}^n)\rightarrow \mathbb{C}^{q\times r} \] \[ (R\varphi)_{ij}:=\,\, \langle T_{\varphi}f_i,g_j\rangle_h \]
 Suppose that $u\in\mathbb{C}^{q\times r}$ is orthogonal to the image of the operator $R$, that is   \[ \sum_{j}\sum_{i} \langle T_{\varphi}f_i,g_j\rangle \overline{u_{ij}}=0 \] 
  for every $\varphi\in L^\infty(\mathbb{C}^n)$. Hence, by Lemma \ref{zero on the diagonal} we have $u=0$. In other words, the image of $R$ is $\mathbb{C}^{q\times r}$. 
  
This shows that $\mathcal{T}(\mathcal{A}_h(\mathbb{C}^n))$ is dense in $\mathcal{B}(\mathcal{A}_h(\mathbb{C}^n))$ in the weak operator topology. Since $\mathcal{T}(\mathcal{A}_h(\mathbb{C}^n))$ is a convex subspace of $\mathcal{B}(\mathcal{A}_h(\mathbb{C}^n))$, then $\mathcal{T}(\mathcal{A}_h(\mathbb{C}^n))$ is dense in $\mathcal{B}(\mathcal{A}_h(\mathbb{C}^n))$ in the strong operator topology.

Now, let $T\in \End_H(\mathcal{A}_h(\mathbb{C}^n))$. There is a sequence $\lbrace {\varphi_i}\rbrace_{i\in\mathbb{N}} \subset L^\infty(\mathbb{C}^n)$ such that \[T_{\varphi_i}\longrightarrow T\]
in the strong operator topology.

Let $\pi$ be the representation of $H$ on $\mathcal{A}_h(\mathbb{C}^n)$ which is given by \[(\pi(g)f)(z)=f(g^{-1}.z)\]
Let $dg$ be the normalized Haar measure on H, and define \[\widehat\varphi_i(z)=\int_H \varphi_i(g^{-1}.z)\, dg\]
Then $\widehat\varphi_i\in L^\infty(\mathbb{C}^n)$ and $\widehat\varphi_i$ is $H$-invariant. Hence \[T_{\widehat\varphi_i}\in \mathcal{T}(\mathcal{A}_h(\mathbb{C}^n))^H \]
    We now show that $ T_{\widehat\varphi_i}\longrightarrow T$ in the strong operator topology. Let $g\in H$, by Proposition\ref{Toeplitz and rep}  \[ T_{g.\varphi_i}=\pi(g)\, T_{\varphi_i} \, \pi(g^{-1}).\]

For $f\in \mathcal{A}_h(\mathbb{C}^n)$, we have
\begin{align*}
    (T_{\widehat\varphi_i}\, f)(z)&=\int\limits_{\mathbb{C}^n} \widehat\varphi_i(w)\, f(w)\, \overline{K_z(w)}\, e^{-2h(w)}\, dw \\
    &= \int\limits_{\mathbb{C}^n} \int\limits_H \varphi_i(g^{-1}.w)\, f(w)\, \overline{K_z(w)} \,e^{-2h(w)}\, dw \, dg\\
    &=\int\limits_H (T_{g.\varphi_i}\, f)(z)\, dg\\
    &= \int\limits_H \pi(g)\, T_{\varphi_i} \, \pi(g^{-1}) f(z) \, dg
\end{align*}

Since $T\in \End_H(\mathcal{A}_h(\mathbb{C}^n)$, we have \[ \pi(g)\, T \, \pi(g^{-1}) = T\] for every $g\in H$.

For $f\in\mathcal{A}_h(\mathbb{C}^n)$, we have \begin{align}
    \|{( T_{\widehat\varphi_i}- T)f}\|_h &= \left\|{ \int\limits_H \pi(g)( T_{\varphi_i}- T) \pi(g^{-1})f \, dg}\right \|_h\\
    &\leq \int\limits_H  \left\|{  \pi(g)( T_{\varphi_i}- T) \pi(g^{-1})f}\right \|_h\ \, dg \\    
    \label{upper-of-int}
    &= \int\limits_H  \left\|{  ( T_{\varphi_i}- T) \pi(g^{-1})f}\right \|_h\ \, dg 
\end{align}

Because $T_{\varphi_i}\longrightarrow T$ strongly, then we have \[ T_{\varphi_i} f\longrightarrow Tf \]
 for every $f\in \mathcal{A}_h(\mathbb{C}^n)$.  Hence, for fixed $f\in \mathcal{A}_h(\mathbb{C}^n)$, the sequence $T_{\varphi_i} f$ is bounded. So, \[\sup\limits_{i}\| T_{\varphi_i} f\|_h <\infty\]
By the Uniform Boundedness Principle,  \[ \sup\limits_{i}\| T_{\varphi_i}\| <\infty.\] Hence, there exists $M>0$ such that \[\| T_{\varphi_i}\|<M \quad\text{for every }i\in\mathbb{N}. \] 

Therefore, \[ \| T_{\varphi_i}-T\|\leq \| T_{\varphi_i}\| +\|T\| \leq M+\|T\|\]

and this implies \[ \left\|{  ( T_{\varphi_i}- T) \pi(g^{-1})f}\right \|_h \leq \left(M+\|T\|\right) \|f\|_h.\]

For every fixed $g\in H$ we have \[\left\|{  ( T_{\varphi_i}- T) \pi(g^{-1})f}\right \|_h \longrightarrow 0.\]
Consequently, inequality \ref{upper-of-int} and the dominated convergence theorem implies that \[ \|{( T_{\widehat\varphi_i}- T)f}\|_h \longrightarrow 0 .\]

Thus, the sequence $T_{\widehat\varphi_i} $ converges to $T$ in the strong operator topology. This shows that \[ \mathcal{T}(\mathcal{A}_h(\mathbb{C}^n))^H\] is dense in $\End_H(\mathcal{A}_h(\mathbb{C}^n))$ under the strong operator topology.
  \end{proof}

As an application of the density theorem established above, we obtain a characterization of the commutativity of the $H$-invariant $C^*$-algebras $\mathcal{T}(\mathcal{A}_h(\mathbb{C}^n))^H$ in terms of the multiplicity-free representation $\pi|_H$.  

First, we recall the following lemma.

\begin{lemma}[\cite{Dawson2015}]
    \label{commutitivity of dense subspace}
    Suppose that $V$ is a linear subspace of the space $\mathcal{L}(\mathcal{H})$ of bounded linear operators on a Hilbert space $\mathcal{H}$. If $V$ consists of operators that commute, then the closure of $V$ in the strong operator topology also consists of operators which commute.
\end{lemma}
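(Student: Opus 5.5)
We must prove the lemma: if $V$ is a linear subspace of commuting operators, then its strong operator closure also consists of commuting operators. The plan is to pass from $V$ to its closure one factor at a time, using the fact that multiplication is separately continuous in the strong operator topology. For fixed bounded $A$, the maps $X\mapsto AX$ and $X\mapsto XA$ are SOT-continuous. The first holds since $\|AX_\alpha f-AXf\|\le\|A\|\,\|X_\alpha f-Xf\|$. The second holds since $X_\alpha Af\to XAf$ is just strong convergence applied at the vector $Af$. Joint continuity fails without uniform bounds, which is why we avoid taking both limits at once.

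\emph{Step 1.} Fix $A\in V$ and let $B$ be in the strong closure $\overline{V}$. Take a net $B_\alpha\in V$ with $B_\alpha\to B$ strongly. Since $AB_\alpha=B_\alpha A$ for all $\alpha$, the separate continuity above gives
\[
AB=\lim_\alpha AB_\alpha=\lim_\alpha B_\alpha A=BA
\]
strongly. Hence every element of $V$ commutes with every element of $\overline{V}$.

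\emph{Step 2.} Now let $A,B\in\overline{V}$ be arbitrary, and take a net $A_\beta\in V$ with $A_\beta\to A$ strongly. By Step 1, $A_\beta B=BA_\beta$ for all $\beta$. Applying separate continuity once more, with $B$ now the fixed operator, gives $AB=BA$.

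The only potential obstacle is the temptation to take both limits simultaneously, which would need joint continuity (requiring norm-boundedness of the nets). The two-step argument avoids this entirely. Nets rather than sequences are used since the strong topology need not be first countable. Linearity of $V$ is not needed for the argument, though it ensures $\overline{V}$ is again a subspace.
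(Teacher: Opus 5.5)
Your argument is correct and complete. Applying separate SOT-continuity of $X\mapsto AX$ and $X\mapsto XA$ twice (equivalently, using that the commutant of any set is SOT-closed, first for $V$ and then for $\overline{V}$) is the standard proof; the paper gives no proof of its own and simply cites \cite{Dawson2015}, and your remarks that nets are needed and that linearity of $V$ plays no role are also accurate.
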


\begin{theorem}
\label{multiplicity free and commutativity}

 If $H$ is a compact subgroup of $\U(n)$, then $\mathcal{T}(\mathcal{A}_h(\mathbb{C}^n))^H$ is commutative if and only if $\pi|_{H}$ is multiplicity-free.
 
\end{theorem}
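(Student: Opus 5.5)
The plan is to sandwich $\mathcal{T}(\mathcal{A}_h(\mathbb{C}^n))^H$ between a dense subset and its strong closure. We then use Lemma \ref{commutitivity of dense subspace} together with the fact that $\End_H(\mathcal{A}_h(\mathbb{C}^n))$ is commutative exactly when $\pi|_H$ is multiplicity-free (\cite{Kobayashi2005}). For compact $H$ this is elementary: the representation splits into finite-dimensional isotypic components, and the commutant is $\bigoplus M_{d}(\mathbb{C})$ over irreducible types, where $d$ is the multiplicity. This algebra is commutative if and only if every $d\le 1$. We may assume this equivalence, as the paper has already done in the previous sections.

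\emph{Multiplicity-free implies commutative.} Suppose $\pi|_H$ is multiplicity-free, and let $\varphi$ be $H$-invariant. Proposition \ref{Toeplitz and rep} gives $T_\varphi\pi(g)=T_{g.\varphi}\pi(g)=\pi(g)T_\varphi$ for all $g\in H$. Hence every generator, and therefore the whole $C^*$-algebra $\mathcal{T}(\mathcal{A}_h(\mathbb{C}^n))^H$, lies in $\End_H(\mathcal{A}_h(\mathbb{C}^n))$. The commutant is a von Neumann algebra, so it is closed under adjoints, products and norm limits. Since $\End_H(\mathcal{A}_h(\mathbb{C}^n))$ is commutative, so is its subalgebra $\mathcal{T}(\mathcal{A}_h(\mathbb{C}^n))^H$.

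\emph{Commutative implies multiplicity-free.} Suppose $\mathcal{T}(\mathcal{A}_h(\mathbb{C}^n))^H$ is commutative. It is a linear subspace of bounded operators consisting of mutually commuting operators. By Lemma \ref{commutitivity of dense subspace}, its strong operator closure also consists of mutually commuting operators. By Theorem \ref{generalized Englis density theorem}, this closure contains $\End_H(\mathcal{A}_h(\mathbb{C}^n))$. Conversely, the closure is contained in $\End_H(\mathcal{A}_h(\mathbb{C}^n))$, because the commutant is SOT-closed. So $\End_H(\mathcal{A}_h(\mathbb{C}^n))$ is commutative, and therefore $\pi|_H$ is multiplicity-free.

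The main point requiring care is this converse direction. One has to check that Lemma \ref{commutitivity of dense subspace} applies to all pairs of elements of the closure simultaneously, not just to a single operator commuting with $V$. The argument uses separate continuity of multiplication in the SOT. For fixed $A\in V$, the set of operators commuting with $A$ is SOT-closed, so every $B$ in the closure commutes with every $A\in V$. Then, for fixed $B$ in the closure, the set of operators commuting with $B$ is SOT-closed and contains $V$, so it contains the whole closure.

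A second point is that Theorem \ref{generalized Englis density theorem} produces approximations by operators $T_{\widehat\varphi_i}$, which are Toeplitz operators with $H$-invariant symbols. These lie in $\mathcal{T}(\mathcal{A}_h(\mathbb{C}^n))^H$, so the density statement applies directly.

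If the paper's reliance on \cite{Kobayashi2005} is thought insufficient, there is an elementary argument for multiplicity $\ge 2$. Take two equivalent irreducible summands $V_1\cong V_2$ and a unitary intertwiner $U:V_1\to V_2$. Define operators on $V_1\oplus V_2$ (extended by zero) from the $2\times2$ matrix units $E_{12},E_{21}$ built from $U$ and $U^*$. These lie in $\End_H(\mathcal{A}_h(\mathbb{C}^n))$ and do not commute, which gives the needed contradiction.
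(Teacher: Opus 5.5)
Your proof is correct and follows the paper's argument: the inclusion $\mathcal{T}(\mathcal{A}_h(\mathbb{C}^n))^H\subset\End_H(\mathcal{A}_h(\mathbb{C}^n))$ gives one direction, and Theorem \ref{generalized Englis density theorem} together with Lemma \ref{commutitivity of dense subspace} transfers commutativity to $\End_H(\mathcal{A}_h(\mathbb{C}^n))$ for the other, with the Kobayashi criterion closing both directions. Your extra details (separate SOT continuity, the closure lying inside the commutant, the matrix-unit argument) are sound; the one slip is the aside that isotypic components are finite-dimensional, which fails when a type occurs with infinite multiplicity (e.g.\ $H$ trivial), but this does not affect the criterion you use.
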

\begin{proof}
    
Suppose first that the $C^*$-algebra $\mathcal{T}(\mathcal{A}_h(\mathbb{C}^n))^H$ is commutative. By Theorem \ref{generalized Englis density theorem} and Lemma \ref{commutitivity of dense subspace}, it follows that \[\End_H(\mathcal{A}_h(\mathbb{C}^n))\] is commutative. Therefore, the representation $\pi|_{H}$  is multiplicity-free.

Conversely, suppose that $\pi|_{H}$ is multiplicity-free. Then \[ \End_H(\mathcal{A}_h(\mathbb{C}^n))\] is commutative. Since $\mathcal{T}(\mathcal{A}_h(\mathbb{C}^n))^H\subset \End_H(\mathcal{A}_h(\mathbb{C}^n))$, it follows that $\mathcal{T}(\mathcal{A}_h(\mathbb{C}^n))^H$ is also commutative.
\end{proof}

\section{ Center-invariant Toeplitz algebras and non-commutativity}

In this section, we will take a deeper look at the $C^*$-algebra generated by Toeplitz operators with symbols invariant under the center of $\mathbf{U_m}$. As in the previous sections, we fix a partition $\mathbf{m}=(b_1,...,b_k)$ of an integer $n=b_1+...+b_k$. Consider the unitary representation $\pi:=\pi_{b1}\otimes...\otimes \pi_{b_k}$, see equation (\ref{representation of U_m}), of the group $\mathbf{U_m}$. Following the notation in \cite{quiroga2021toeplitz}, we denote by $\I_{(j)}$ the identity operator on $\mathbb{C}^{b_j}$. Consider the group \[\mathbb{T}^{\mathbf{m}}:=\left\lbrace \left(t_1\I_{(1)},...,t_k\I_{(k)}\right): t_i\in \mathbb{T }\right\rbrace\]
which is the center of $\mathbf{U_m}$.
The following result provides the isotypic decomposition of the restriction of the representation $\pi$ to the subgroup $\mathbb{T}^m$.
\begin{theorem}
\label{Isotypic decompisition of T^m}
    Let $\mathbf{m}=(b_1,...,b_k)$ be a partition of an integer $n=b_1+...+b_k$. The isotypic decomposition of $\pi|_{\mathbb{T}^{\mathbf{m}}}$ is
    \[\mathcal{A}_h(\mathbb{C}^n)= \bigoplus_{s_1,...,s_k=0}^{\infty} P^{s_1}(\mathbb{C}^{b_1})\otimes...\otimes P^{s_k}(\mathbb{C}^{b_k}) \]
Moreover, the representation $\pi|_{\mathbb{T}^{\mathbf{m}}}$ is multiplicity-free if and only if $\mathbf{m}=(1,...,1)$.
    \begin{proof}
        The decomposition of $\mathcal{A}_h(\mathbb{C}^n)$ was obtained in Proposition \ref{Isotypic decomposition of U_m}. It remains to determine the action of $\mathbb{T}^{\mathbf{m}}$ on each summand. Let $p(z)\in P^{s_1}(\mathbb{C}^{b_1})\otimes...\otimes P^{s_k}(\mathbb{C}^{b_k})$, and let \[A:=\left(t_1\I_{(1)},...,t_k\I_{(k)}\right)\in\mathbb{T}^{\mathbf{m}}\] Then we have
        \begin{align*}
            A.p(z)=p(A^{-1}.z) &=p\left(t^{-1}_{1}z_{(1)},...,t^{-1}_{k}z_{(k)}\right)\\
            &=t^{-s_1}_1...t^{-s_k}_k p(z)
        \end{align*}

        Hence, $\mathbb{T}^{\mathbf{m}}$ acts on $P^{s_1}(\mathbb{C}^{b_1})\otimes...\otimes P^{s_k}(\mathbb{C}^{b_k})$ through the character 
        \[\chi_{\mathbf{s}}(t_1,...,t_k)=t_1^{-s_1}\dots t_k^{-s_k}\]

        Let $\mathbf{s}=(s_1,\dots,s_k)$ and $\mathbf{r}=(r_1,\dots,r_k)$. If $\mathbf{s}\neq \mathbf{r}$, then $\chi_{\mathbf{s}}\neq \chi_{\mathbf{r}}$. Therefore, the summands \[ P^{s_1}(\mathbb{C}^{b_1})\otimes...\otimes P^{s_k}(\mathbb{C}^{b_k}) \] are the isotypic components of $\pi|_{\mathbf{m}}$.

       Furthermore, the representation $\pi|_{\mathbf{m}}$ is multiplicity-free if and only if $\dim P^{s_j}(\mathbb{C}^{b_j})=1$, and this holds if and only if $b_j=1$ for all $j=1,...,k$.
    \end{proof}
\end{theorem}

For $\mathbf{s}=(s_1,\dots,s_k)\in\mathbb{N}_0^k$, let \[ V_{\mathbf{s}}:=P^{s_1}(\mathbb{C}^{b_1})\otimes\dots\otimes P^{s_k}(\mathbb{C}^{b_k}) \]
An operator $T\in\mathcal{B}(\mathcal{A}_h(\mathbb{C}^n))$ belongs to $\End_{\mathbb{T}^{\mathbf{m}}}(\mathcal{A}_h(\mathbb{C}^n))$ if and only if \[T(V_{\mathbf{s}})\subseteq V_{\mathbf{s}}\]
for every $\mathbf{s}\in\mathbb{N}_0^k$. Therefore, \[\End_{\mathbb{T}^{\mathbf{m}}}(\mathcal{A}_h(\mathbb{C}^n))= \left\lbrace \bigoplus_{\mathbf{s}\in\mathbb{N}_0^k}\, T_{\mathbf{s}}: T_{\mathbf{s}}\in\mathcal{B}(V_\mathbf{s}) \text{ and } \sup_{\mathbf{s}\in\mathbb{N}_0^k} \|T_{\mathbf{s}}\|<\infty \right\rbrace\]

Moreover, if $\mathbf{m}=(1,\dots,1)$ is a partition of an integer $n$, then $\dim(V_{\mathbf{s}})=1$ for every $\mathbf{s}\in\mathbb{N}_0^n$. Hence every $T\in \End_{\mathbb{T}^{\mathbf{m}}}(\mathcal{A}_h(\mathbb{C}^n)) $ acts on $V_{\mathbf{s}}$ as multiplication by a scalar. Therefore, \[\End_{\mathbb{T}^{\mathbf{m}}}(\mathcal{A}_h(\mathbb{C}^n)) \simeq \ell^\infty(\mathbb{N}_0^n)\]
In fact, this shows that if $\mathbf{m}=(1,...,1)$, then $\End_{\mathbb{T}^{\mathbf{m}}}(\mathcal{A}_h(\mathbb{C}^n))$ is commutative. Consequently, the $C^*$-algebra $\mathcal{T}(\mathcal{A}_h(\mathbb{C}^n))^{\mathbb{T}^{\mathbf{m}}}$  is also commutative. We record this result in the following proposition, where the proof follows from Theorems \ref{multiplicity free and commutativity} and \ref{Isotypic decompisition of T^m}. 

\begin{proposition}

    The $C^*$-algebra $\mathcal{T}(\mathcal{A}_h(\mathbb{C}^n))^{\mathbb{T}^{\mathbf{m}}}$  is commutative if and only if $\mathbf{m}= (1,...,1)$.
    
\end{proposition}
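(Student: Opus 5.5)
The plan is to derive the statement from Theorem \ref{multiplicity free and commutativity} applied to the compact subgroup $H=\mathbb{T}^{\mathbf{m}}\subseteq\U(n)$. Since $\mathbb{T}^{\mathbf{m}}$ is a closed torus inside $\U(n)$, it is compact. Theorem \ref{multiplicity free and commutativity} then gives that $\mathcal{T}(\mathcal{A}_h(\mathbb{C}^n))^{\mathbb{T}^{\mathbf{m}}}$ is commutative if and only if $\pi|_{\mathbb{T}^{\mathbf{m}}}$ is multiplicity-free. Theorem \ref{Isotypic decompisition of T^m} says this holds exactly when $\mathbf{m}=(1,\dots,1)$. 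Composing the two equivalences proves the proposition.

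To make the argument self-contained at the critical point, I would spell out the multiplicity-free criterion. The group $\mathbb{T}^{\mathbf{m}}$ is abelian, so its irreducible representations are one-dimensional characters. By Theorem \ref{Isotypic decompisition of T^m}, the character $\chi_{\mathbf{s}}$ occurs in $\pi|_{\mathbb{T}^{\mathbf{m}}}$ with multiplicity
\[\dim V_{\mathbf{s}}=\prod_{j=1}^k\binom{s_j+b_j-1}{b_j-1}.\]
If some $b_j\geq 2$, take $\mathbf{s}$ with $s_j=1$ and all other entries $0$. Then $\dim V_{\mathbf{s}}=b_j\geq 2$, so the representation is not multiplicity-free. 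Conversely, if every $b_j=1$, all these dimensions equal $1$. This is the "only if" direction of the multiplicity count in Theorem \ref{Isotypic decompisition of T^m}.

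For the non-commutative direction I could avoid Kobayashi's abstract criterion and argue concretely. Suppose $b_j\geq 2$ and $\mathbf{s}$ is as above, so $\dim V_{\mathbf{s}}\geq 2$. Choose two non-commuting operators on $V_{\mathbf{s}}$ and extend them by $0$ on $V_{\mathbf{s}}^\perp$. These extensions lie in $\End_{\mathbb{T}^{\mathbf{m}}}(\mathcal{A}_h(\mathbb{C}^n))$ by the block description of that algebra given just before the proposition. By Theorem \ref{generalized Englis density theorem}, $\mathcal{T}(\mathcal{A}_h(\mathbb{C}^n))^{\mathbb{T}^{\mathbf{m}}}$ is SOT-dense in this endomorphism algebra. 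If the Toeplitz algebra were commutative, Lemma \ref{commutitivity of dense subspace} would force its SOT closure, namely the whole endomorphism algebra, to be commutative, which is a contradiction.

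The main obstacle is making sure the density theorem really yields SOT closure equal to all of $\End_{\mathbb{T}^{\mathbf{m}}}$. Lemma \ref{commutitivity of dense subspace} needs a linear subspace of commuting operators. The $C^*$-algebra $\mathcal{T}(\mathcal{A}_h(\mathbb{C}^n))^{\mathbb{T}^{\mathbf{m}}}$ is such a subspace, and Theorem \ref{generalized Englis density theorem} already provides the density. So once those two results are cited correctly, the proof is short.
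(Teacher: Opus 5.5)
Your proposal is correct and follows the paper's route: apply Theorem \ref{multiplicity free and commutativity} to the compact subgroup $H=\mathbb{T}^{\mathbf{m}}$ and combine it with the multiplicity count from Theorem \ref{Isotypic decompisition of T^m}. Your construction of two non-commuting operators on $V_{\mathbf{s}}$, extended by zero, simply unpacks the ``commutative $\Rightarrow$ multiplicity-free'' half of that theorem without appealing to Kobayashi's criterion.
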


Since $\mathbb{T}^{\mathbf{m}}$ is a compact subgroup of $\mathbf{U_m}$, by Theorem \ref{generalized Englis density theorem} we obtain the following result.
\begin{proposition}
    The $C^*$-algebra $\mathcal{T}(\mathcal{A}_h(\mathbb{C}^n))^{\mathbb{T}^{\mathbf{m}}}$ is dense in $\End_{\mathbb{T}^{\mathbf{m}}}(\mathcal{A}_h(\mathbb{C}^n))$ under the strong operator topology.
\end{proposition}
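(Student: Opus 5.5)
The statement is a direct specialization of Theorem \ref{generalized Englis density theorem}, so the plan is to check that its hypotheses hold and then invoke it with $H=\mathbb{T}^{\mathbf{m}}$.

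The first step is to confirm that $\mathbb{T}^{\mathbf{m}}$ is a compact subgroup of $\U(n)$. It is the image of the torus $\mathbb{T}^k$ under the continuous injective homomorphism
\[(t_1,\dots,t_k)\mapsto \left(t_1\I_{(1)},\dots,t_k\I_{(k)}\right),\]
which is block diagonal and unitary. Its image is therefore compact and closed in $\mathbf{U_m}\subseteq\U(n)$. The action of $\mathbb{T}^{\mathbf{m}}$ on $\mathcal{A}_h(\mathbb{C}^n)$ is the restriction of $\pi$, and this is exactly the representation used in Theorem \ref{generalized Englis density theorem}.

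Next, apply Theorem \ref{generalized Englis density theorem} with $H=\mathbb{T}^{\mathbf{m}}$. This shows that the set of Toeplitz operators $T_{\widehat\varphi}$ with $\mathbb{T}^{\mathbf{m}}$-invariant symbols is SOT-dense in $\End_{\mathbb{T}^{\mathbf{m}}}(\mathcal{A}_h(\mathbb{C}^n))$. The averaging construction $\widehat\varphi(z)=\int_{\mathbb{T}^{\mathbf{m}}}\varphi(g^{-1}z)\,dg$ with normalized Haar measure on $\mathbb{T}^{\mathbf{m}}\cong\mathbb{T}^k$ works verbatim.

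Finally, I would verify the inclusion $\mathcal{T}(\mathcal{A}_h(\mathbb{C}^n))^{\mathbb{T}^{\mathbf{m}}}\subseteq \End_{\mathbb{T}^{\mathbf{m}}}(\mathcal{A}_h(\mathbb{C}^n))$. By Proposition \ref{Toeplitz and rep}, each generator $T_\varphi$ with invariant $\varphi$ commutes with every $\pi(g)$, $g\in\mathbb{T}^{\mathbf{m}}$. The commutant of a set of unitaries is a norm-closed $*$-algebra, so the whole $C^*$-algebra generated by these operators lies inside it. The $C^*$-algebra contains the SOT-dense set from the previous step, so its SOT closure is all of $\End_{\mathbb{T}^{\mathbf{m}}}(\mathcal{A}_h(\mathbb{C}^n))$.

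The only point needing care is this inclusion: density of a subset must pass to density of the generated algebra, and the algebra must not be larger than the commutant. Both are settled by the commutant argument, so I expect no real obstacle.
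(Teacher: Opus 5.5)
Your proposal is correct and follows the same route as the paper, which simply invokes Theorem \ref{generalized Englis density theorem} with $H=\mathbb{T}^{\mathbf{m}}$ viewed as a compact subgroup. Your extra checks spell out what the paper leaves implicit: compactness of $\mathbb{T}^{\mathbf{m}}$ as the image of $\mathbb{T}^k$, and the inclusion of the generated $C^*$-algebra in the commutant $\End_{\mathbb{T}^{\mathbf{m}}}(\mathcal{A}_h(\mathbb{C}^n))$.
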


\section{Failure of the Square-Root Metric Description}

Let $h(r)=\frac{1}{2}r^2$, then $\mathcal{A}_h(\mathbb{C})$ is the classical Fock space. It is well known that the $C^*$-algebra generated by Toeplitz operators with bounded radial  symbols is isometrically isomorphic to the $C^*$-algebra generated by the set $\mathfrak{B}$ of all sequences of the eigenvalues \[ \mathfrak{B}:= \lbrace \lambda_{a}: a\in L^\infty(\mathbb{R}^+) \rbrace\]
Esmeral and Maximenko \cite{esmeral2016radial} proved that the uniform closure of $\mathfrak{B}$ coincides with the $C^*$-algebra consisting of all bounded sequences that are uniformly continuous with respect to the square-metric \[ \rho(m,n)=|\sqrt{m}-\sqrt{n}|,\quad m,n\in \mathbb{Z}^+\]
This result was generalized by Dewage and \'{O}lafsson \cite{dewage2022toeplitz} to the $k$-quasi-radial symbols on the classical Fock space $\mathcal{F}^2(\mathbb{C}^n)$.

In this section, we will show that this does not hold when considering general radial weights. In particular, we will consider the logarithmically growing weight

 \[ h(r)=\begin{cases}
    0 & \,: 0\leq r \leq 1\\[2mm]
    \frac{1}{2}(\log r)^{3/2} &:\, 1<r
\end{cases}\]

This kind of weights appears in the theory of small Fock spaces; see, for example, \cite{borichev2010riesz, kellay2020riesz}.

Now we state the main theorem in this section:

\begin{theorem} \label{not-uniformly-cont}

For the radial weight
\[ h(r)=\begin{cases}
    0 & \,: 0\leq r \leq 1\\[2mm]
    \frac{1}{2}(\log r)^{3/2} &:\, 1<r
\end{cases}\]
there exists a bounded radial symbol $\varphi$ such that the eigenvalue
sequence
\[
\bigl(\lambda_{\varphi,m}\bigr)_{m\in\mathbb N_0}
\]
is not uniformly continuous with respect to the square-root metric
\[
\rho(m,n)=|\sqrt{m}-\sqrt{n}|.
\]
\end{theorem}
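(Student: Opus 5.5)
We apply Theorem \ref{eigenvalues radial} with a radial symbol $\varphi(z)=\mathbf{a}(|z|)$ taking only the values $\pm1$, chosen so that $\lambda_{\varphi,m}\to(-1)^m$. This suffices: since $|\sqrt{m+1}-\sqrt m|\to0$ while $|\lambda_{\varphi,m+1}-\lambda_{\varphi,m}|\to2$, the sequence cannot be uniformly continuous for $\rho$. By Theorem \ref{eigenvalues radial}, $\lambda_{\varphi,m}$ is the average of $\mathbf{a}$ against the probability density proportional to $\rho^{2m+2n-1}e^{-2h(\rho)}$. The strategy is to show that these densities concentrate on pairwise disjoint, increasingly separated windows, and then set $\mathbf{a}=(-1)^m$ on the $m$-th window.

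\emph{Step 1 (change of variables).} Substitute $\rho=e^{t}$. On $t>0$ the density becomes proportional to $e^{F_m(t)}$ with
\[
F_m(t)=N t-t^{3/2},\qquad N:=2m+2n .
\]
The part $\rho\in[0,1]$ contributes at most $\int_0^1\rho^{N-1}d\rho=1/N$. This is negligible against the total mass, which is at least of order $e^{F_m(t_m)}$ with $t_m$ as in Step 2, and $F_m(t_m)$ grows like $m^3$.

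\emph{Step 2 (locating the peaks).} $F_m$ is strictly concave on $(0,\infty)$. Its maximum is at $t_m=(2N/3)^2$, which is of order $\tfrac{16}{9}m^2$, and $F_m''(t_m)=-\tfrac34 t_m^{-1/2}$, which is of order $-\tfrac{9}{16m}$. So the natural width of the peak is of order $\sqrt m$. The gap between consecutive peaks is $t_{m+1}-t_m$, of order $\tfrac{32}{9}m$, which is much larger than $\sqrt m$.

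\emph{Step 3 (the symbol).} Let $c_m=\tfrac12(t_m+t_{m+1})$ and $c_{-1}=-\infty$. Define $\mathbf{a}(e^{t})=(-1)^m$ for $t\in[c_{m-1},c_m)$. This is a bounded radial symbol, and
\[
|\lambda_{\varphi,m}-(-1)^m|\le 2\,\frac{\int_{t\notin[c_{m-1},c_m)}e^{F_m(t)}e^{t}\,dt+O(1/N)}{\int e^{F_m(t)}e^{t}\,dt}.
\]
The factor $e^{t}$ is the Jacobian. It can be absorbed by replacing $N$ with $N+1$, which changes nothing in the asymptotics.

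\emph{Step 4 (tail estimate, the main obstacle).} It remains to show that the mass of $e^{F_m}$ outside $[t_m-\delta_m,t_m+\delta_m]$, where $\delta_m:=\min(t_m-c_{m-1},c_m-t_m)$ is of order $m$, is $o(1)$ relative to the total mass. I would use concavity. For $|t-t_m|\ge\delta$ we have $F_m(t)\le F_m(t_m\pm\delta)-|F_m'(t_m\pm\delta)|(|t-t_m|-\delta)$. Taylor's theorem gives
\[
F_m(t_m\pm\delta)\le F_m(t_m)-c\,\delta^2/m \quad\text{and}\quad |F_m'(t_m\pm\delta)|\ge c\,\delta/m
\]
for $\delta\le t_m/2$, since $|F_m''|$ stays comparable to $1/m$ there. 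Hence the tail mass is at most of order $(m/\delta)e^{F_m(t_m)-c\delta^2/m}$. Meanwhile the total mass is at least of order $\sqrt m\,e^{F_m(t_m)}$, obtained by integrating over $|t-t_m|\le\sqrt m$. With $\delta$ of order $m$ the ratio is $O(\sqrt m\,e^{-cm})\to0$.

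The delicate points are checking that $\delta_m\le t_m/2$, so that the second-derivative bounds hold uniformly, and tracking constants carefully on both sides ($t_m-c_{m-1}$ and $c_m-t_m$ are both of order $\tfrac{16}{9}m$). Both are routine once $t_m$ is explicit. This gives $\lambda_{\varphi,m}\to(-1)^m$ and proves the theorem.
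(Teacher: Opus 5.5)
Your proof takes the same route as the paper. The substitution $u=\log\rho$ turns the weight into $e^{F_m(u)}$ with $F_m(u)=(2m+2n)u-u^{3/2}$; the paper treats only $n=1$. The symbol is $(-1)^m$ on a window of width of order $m$ around the peak $\tfrac49(2m+2n)^2$, and the mass outside that window is exponentially small in $m$. For $n=1$, the paper's windows $[\tfrac49(2m+1)^2,\tfrac49(2m+3)^2)$ are yours shifted left by $\tfrac49$. Instead of your Taylor/concavity bounds, the paper evaluates $F_m$ exactly at the window endpoints; the drops from the maximum are $\tfrac{8}{27}(3m+2)$ and $\tfrac{8}{27}(3m+4)$. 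That is more explicit, while your Laplace-type estimate is less tied to this particular weight.

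One thing in Step 3 is wrong and must be removed. Since $\rho^{N-1}\,d\rho=e^{Nt}\,dt$ with $N=2m+2n$, Step 1 already gives the full measure $e^{F_m(t)}\,dt$. The extra factor $e^{t}$ in your display counts the Jacobian twice.

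Your claim that replacing $N$ by $N+1$ ``changes nothing in the asymptotics'' is also false here. Consecutive values of $m$ shift the exponent by $2$, so a shift by $1$ moves the peak by half the spacing between consecutive peaks. Concretely, the maximum of $(N+1)t-t^{3/2}$ is at $\tfrac49(N+1)^2=c_m-\tfrac49$, essentially on the edge of your window. So about half of that mass lies outside $[c_{m-1},c_m)$, and the right-hand side of your display tends to $1$, not $0$. Had you instead rebuilt the windows from $N+1$, the true peak $\tfrac49N^2$ would sit just left of the new boundary $\tfrac49(N^2+1)$, and you would get $\lambda_{\varphi,m}\to 0$.

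Because Step 1 is correct and Step 4 estimates $e^{F_m}$ itself, deleting the $e^{t}$ and the remark leaves a complete proof.
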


To prove Theorem \ref{not-uniformly-cont}, we will need a technical lemma. First, it is clear that $h(r)$ is continuous on $[0,\infty)$ and \[\int\limits_0^\infty r^{2m+1}\,e^{-2h(r)}<\infty\]

 for every $m\in\mathbb{N}_0$.

Write
\begin{align*}
    \int\limits_0^\infty r^{2m+1}\,e^{-2h(r)}dr&=\int\limits_0^1 r^{2m+1}\,dr+ \int\limits_1^\infty r^{2m+1}\,e^{-(\log r)^{3/2}}\,.dr\\
    &=\int\limits_0^1 r^{2m+1}\,dr + \int\limits_0^\infty e^{(2m+2)u-u^{3/2}}\,du 
\end{align*}

and let $F_m(u)=(2m+2)u-u^{3/2}$. The function $F_m$ has a maximum value at \[u_m:=\dfrac{4}{9}(2m+2)^2\]

Let $R_{-1}=0$ and $R_m=\exp{\left[\dfrac{4}{9}(2m+3)^2\right]}$. Hence, the point $u_m$ at which $F_m$ has a maximum value of lies in the interval $\big(\log(R_{m-1}), \log(R_m)\big) $.

Define the function $\varphi$ on $\mathbb{C}$ by \[\varphi(z)=(-1)^m \text{ whenever } R_{m-1}\leq |z|< R_m .\] The function $\varphi$ is bounded and radial. Consider the Toeplitz operator $T_\varphi$. By Theorem \ref{eigenvalues radial}, we have \[T_\varphi|_{P^m(\mathbb{C})}=\lambda_{\varphi,m}\I\]
where the eigenvalues $\lambda_{\varphi,m}$ are given by  \[\lambda_{\varphi,m}=\dfrac{\displaystyle\int_0^\infty \varphi(u)\,u^{2m+1}e^{-2h(u)}\, du}{\displaystyle\int_0^\infty\, u^{2m+1}e^{-2h(u)}\,du}\]

\begin{lemma}
\label{alternating-eigenvalues}
For the symbol $\varphi$ defined above,
\[
\lim_{m\rightarrow\infty}\left|\lambda_{\varphi,m}-(-1)^m\right|=0
\]

\begin{proof}
 Let
\[
A_m:=\int_0^\infty u^{2m+1}e^{-2h(u)}\,du.
\]
Then
\[
\left|\lambda_{\varphi,m}-(-1)^m\right|
\leq
\dfrac{1}{A_m}
\int_0^\infty
\left|\varphi(u)-(-1)^m\right|
u^{2m+1}e^{-2h(u)}\,du
\]
Since $\varphi(u)=(-1)^m$ on $[R_{m-1},R_m)$, we obtain
\[
\left|\lambda_{\varphi,m}-(-1)^m\right|
\leq
\dfrac{2B_m+2C_m}{A_m}
\]
where
\[
B_m:=\int_0^{R_{m-1}}u^{2m+1}e^{-2h(u)}\,du
\]
and
\[
C_m:=\int_{R_m}^{\infty}u^{2m+1}e^{-2h(u)}\,du
\]
We will show that
\[
\frac{B_m}{A_m}\longrightarrow0
\qquad\text{and}\qquad
\frac{C_m}{A_m}\longrightarrow0
\]
For $m\in\mathbb{N}_0$ large enough,
\[
B_m
=
\int_0^1u^{2m+1}\,du
+
\int_0^{\log(R_{m-1})}e^{F_m(u)}\,du
\]
We first obtain a lower bound for $A_m$. Since
\[
F_m''(u)=-\dfrac34u^{-1/2}
\]
 Taylor's theorem implies that there exists  $\xi\in[u_m,u]$ such that
\begin{align*}
F_m(u)
&=F_m(u_m)+F_m'(u_m)(u-u_m)+\frac{1}{2}F_m''(\xi)(u-u_m)^2\\
&=F_m(u_m)-\frac{3}{8}\xi^{-1/2}(u-u_m)^2\\
&\geq F_m(u_m)-1
\end{align*}
 Therefore,
\begin{equation}
\label{A_mlowerbound}
A_m\geq\int_{u_m}^{u_m+1}e^{F_m(u)}\,du
\geq e^{F_m(u_m)-1}
\end{equation}
Since $A_m\rightarrow\infty$, we have 
\[
\frac{1}{A_m}\int_0^1u^{2m+1}\,du
=
\frac{1}{(2m+2)A_m}
\longrightarrow0
\]
Since $F_m(u)$ is increasing on $[0,\log(R_{m-1})]$, we have
\begin{equation}
\label{R_m-1upperbound}
\int_0^{\log(R_{m-1})}e^{F_m(u)}\,du
\leq
\log(R_{m-1})
\exp\left[F_m\left(\log(R_{m-1})\right)\right]
\end{equation}
Using \eqref{R_m-1upperbound} and \eqref{A_mlowerbound}, we obtain
\begin{align*}
\dfrac{1}{A_m}\int_0^{\log(R_{m-1})}e^{F_m(u)}\,du
&\leq
e\log(R_{m-1})
\exp\left[
F_m(\log(R_{m-1}))-F_m(u_m)
\right]\\
&=
e\frac{4}{9}(2m+1)^2
\exp\left[
F_m\left(\frac{4}{9}(2m+1)^2\right)
-
F_m\left(\frac{4}{9}(2m+2)^2\right)
\right]\\
&=
e\frac{4}{9}(2m+1)^2
\exp\left[-\frac{8}{27}(3m+2)\right]
\longrightarrow0
\end{align*}
Therefore,
\[
\frac{B_m}{A_m}\longrightarrow0
\]
It remains to show that
\[
\frac{C_m}{A_m}\longrightarrow0
\]
By the change of variables $u=\log r$,
\[
C_m
=
\int_{\log(R_m)}^\infty e^{F_m(u)}\,du
\]
Since
\[
F_m'(u)=(2m+2)-\frac32\sqrt{u}
\]
then $F_m'$ is decreasing on $[\log(R_m),\infty)$. If $u\geq\log(R_m)$, then 
\[
F_m'(u)\leq F_m'(\log(R_m))=-1.
\]
Thus,
\[
F_m(u)-F_m(\log(R_m))
=
\int_{\log(R_m)}^uF_m'(t)\,dt
\leq
\log(R_m)-u
\]
and therefore
\[
e^{F_m(u)}
\leq
e^{F_m(\log(R_m))}
e^{\log(R_m)-u}
\]
Integrating over $[\log(R_m),\infty)$ gives
\begin{equation}
\label{R_m-upper-bound}
\int_{\log(R_m)}^\infty e^{F_m(u)}\,du
\leq
e^{F_m(\log(R_m))}
\end{equation}
Using \eqref{A_mlowerbound} and \eqref{R_m-upper-bound}, we obtain
\begin{align*}
\dfrac{C_m}{A_m}
&\leq
e\exp\left[
F_m(\log(R_m))-F_m(u_m)
\right]\\
&=
e\exp\left[
F_m\left(\frac{4}{9}(2m+3)^2\right)
-
F_m\left(\frac{4}{9}(2m+2)^2\right)
\right]\\
&=
e\exp\left[-\frac{8}{27}(3m+4)\right]
\longrightarrow0
\end{align*}
Hence
\[
\left|\lambda_{\varphi,m}-(-1)^m\right|
\leq
2\frac{B_m}{A_m}
+
2\frac{C_m}{A_m}
\longrightarrow0
\]
and this proves the lemma.

\end{proof}

\end{lemma}

Now we are ready to prove Theorem \ref{not-uniformly-cont}.

\begin{proof} 

By Lemma~\ref{alternating-eigenvalues},
\[
\left|\lambda_{\varphi,m}-(-1)^m\right|\longrightarrow0
\]
Therefore,
\[
|\lambda_{\varphi,m+1}-\lambda_{\varphi,m}|
\longrightarrow 2
\]
On the other hand,
\[
\rho(m+1,m)
=
\sqrt{m+1}-\sqrt{m}
\longrightarrow0
\]
Hence the sequence $\left(\lambda_{\varphi,m}\right)_{m\in\mathbb{N}_0}$ is not uniformly continuous with respect to the square-root metric $\rho$.
    
\end{proof}

\begin{corollary}
The uniform closure of the set of eigenvalue sequences
\[
\left\{
(\lambda_{\varphi,m})_{m\in\mathbb{N}_0} :
\varphi\in L^\infty(\mathbb{C}) \text{ is radial}
\right\}
\]
does not coincide with the $C^*$-algebra of all bounded sequences that are uniformly continuous with respect to the square-root metric
\[
\rho(m,n)=|\sqrt{m}-\sqrt{n}|.
\]
\end{corollary}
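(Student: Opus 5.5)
The plan is to deduce the Corollary directly from Theorem \ref{not-uniformly-cont}, or more precisely from Lemma \ref{alternating-eigenvalues}, by observing that uniform continuity with respect to $\rho$ is preserved under uniform limits.

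Let $\mathcal{C}_\rho$ denote the set of bounded sequences on $\mathbb{N}_0$ that are uniformly continuous with respect to $\rho(m,n)=|\sqrt{m}-\sqrt{n}|$. First I will check that $\mathcal{C}_\rho$ is closed in $\ell^\infty(\mathbb{N}_0)$ under the sup norm. Take $x^{(j)}\in\mathcal{C}_\rho$ with $\|x^{(j)}-x\|_\infty\to 0$, and fix $\varepsilon>0$. Choose $j$ with $\|x^{(j)}-x\|_\infty<\varepsilon/3$. Then choose $\delta>0$ such that $\rho(m,n)<\delta$ implies $|x^{(j)}_m-x^{(j)}_n|<\varepsilon/3$. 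The triangle inequality then gives $|x_m-x_n|<\varepsilon$ whenever $\rho(m,n)<\delta$. So $\mathcal{C}_\rho$ is a closed subset; it is the standard $C^*$-algebra from \cite{esmeral2016radial}.

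Now argue by contradiction. Suppose the uniform closure of the eigenvalue sequences equals $\mathcal{C}_\rho$. In particular, every eigenvalue sequence $(\lambda_{\varphi,m})_m$ with $\varphi$ bounded and radial would lie in $\mathcal{C}_\rho$. But the symbol $\varphi$ constructed before Lemma \ref{alternating-eigenvalues} is bounded and radial, and Theorem \ref{not-uniformly-cont} shows its eigenvalue sequence is not in $\mathcal{C}_\rho$. Explicitly, $|\lambda_{\varphi,m+1}-\lambda_{\varphi,m}|\to 2$ while $\rho(m+1,m)\to0$. This is a contradiction, so the two sets differ; indeed, the closure is not even contained in $\mathcal{C}_\rho$.

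There is essentially no obstacle. The only point needing care is that the sequence produced in Theorem \ref{not-uniformly-cont} belongs to the set whose closure is taken. This holds because $\varphi$ is radial and bounded by $1$, and its eigenvalues are exactly those given by Theorem \ref{eigenvalues radial} with $n=1$. The closedness step above is routine. Note that we do not need it for the contradiction as stated, since the closure contains the set itself; it only clarifies that the two closed algebras genuinely differ.
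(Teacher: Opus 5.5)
Your proposal is correct and is the argument the paper intends; the paper states the corollary with no separate proof, as an immediate consequence of Theorem \ref{not-uniformly-cont}. The radial symbol $\varphi$ built before Lemma \ref{alternating-eigenvalues} gives an eigenvalue sequence that lies in the set, hence in its uniform closure, yet is not uniformly continuous with respect to $\rho$. Your check that the uniformly continuous sequences form a sup-norm closed set is valid but, as you note yourself, not needed.
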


\bibliographystyle{amsplain}
\bibliography{sn-bibliography}

\end{document}